\newcommand{\xdownarrow}[1]{%
	{\left\downarrow\vbox to #1{}\right.\kern-\nulldelimiterspace}
}
\begin{document}
 \title{Well-posedness of a coupled system of  Skorohod-like stochastic differential equations\thanks{Received date, and accepted date (The correct dates will be entered by the editor).}}

          %For each author, make a block with the following macros:

          \author{Thi Kim Thoa Thieu\thanks{Department of Mathematics and Computer Science, Karlstad University, Karlstad, Sweden, (ttkthoa93@gmail.com).}
          \and Adrian Muntean\thanks{Department of Mathematics and Computer Science  \& Center for Societal Risk Research (CSR),  Karlstad University, Karlstad, Sweden, (adrian.muntean@kau.se).}}

         \pagestyle{myheadings} \markboth{Well-posedness of a coupled system of Skorohod-like stochastic differential equations}{Thoa Thieu and Adrian Muntean} \maketitle

          \begin{abstract}
               We study the well-posedness of a coupled system of Skorohod-like stochastic differential equations with reflecting boundary condition. The setting describes the  evacuation dynamics of a mixed crowd composed of both active and passive pedestrians moving through a domain with obstacles, fire and smoke. As main working techniques, we use compactness methods  and the Skorohod's representation of solutions to SDEs posed in bounded domains. This functional setting is a new point of view in the field of modeling and simulation pedestrian dynamics. The main challenge is to handle the coupling in the model equations together with the multiple-connectedness of the domain and the pedestrian-obstacle interaction. 
          \end{abstract}
\begin{keywords}  Pedestrian dynamics; Stochastic differential equations; reflecting boundary condition; Skorohod equations; heterogenous domain; fire.
\end{keywords}

 \begin{AMS} 60H10, 60H30, 70F99
\end{AMS}
          \section{Introduction}\label{intro}

In this paper, we study the well-posedness of a coupled system of  Skorohod-like stochastic differential equations modeling the dynamics of pedestrians  through a heterogenous domain in the presence of fire. From the modeling perspective, our approach is novel, opening many routes for investigation especially what concerns the computability of solutions  and identification of model parameters.  The standing assumption is that the crowd of pedestrians is composed of two distinct populations: an {\em active population}  -- these pedestrians are aware of the details of the environment and move towards the exit door, and a {\em passive population} -- these pedestrians are not aware of the details of the geometry and move randomly to explore the environment and eventually to find the exit. All pedestrians are seen as moving point particles driven by a suitable over-damped Langevin model, which will be described in Section \ref{Model}. Our model belongs to the class of social-velocity models for crowd dynamics.  It is posed in a two dimensional multiple connected region $D$, containing obstacles with a fixed location. Furthermore,  a stationary fire, which produces smoke, is placed within the geometry forcing the pedestrians to choose a proper own velocity such that they evacuate. The fire is seen, as a first attempt, as a stationary obstacle. 

To keep a realistic picture, the overall dynamics is restricted to a bounded "perforated" domain, i.e. all obstacles are seen as impenetrable regions. The geometry is described in Subsection \ref{geometry};  see Figure \ref{fig:fig1} to fix ideas.
In this framework, we consider  reflecting boundary conditions and plan, as further research, to treat the case of  mixed reflection--flux boundary conditions so that the exits can allow for outflux.  In this framework, we focus on the interior obstacles. To achieve a correct dynamics of dynamics of the pedestrians close to the boundary of the interior obstacles, we choose to work with the classical Skorohod's formulation of SDEs; we refer the reader to the textbook \cite{Pilipenko2014} for more details on this subject. Note that this approach is needed especially because of the chosen dynamics for the passive and active pedestrians who are able to avoid collisions with the obstacles by using a motion planning  map ({\em a priori} given paths -- solution to a suitable Eikonal-like equation; cf. Appendix \ref{Eikonal}).

\section{Related contributions. Main questions of this research}

A number of relevant results are available on the dynamics of mixed active-passive pedestrian populations. As far as we are aware, the first questions in this context were posed in the modeling and simulation study \cite{Richardson2019}  while considering the evacuation dynamics of a mixed active--passive pedestrian populations in a complex geometry in the presence of a fire as well as of a slowly spreading smoke curtain. From a stochastic processes perspective, various lattice gas models for active-passive pedestrian dynamics have been recently explored in \cite{Cirillo2019} and \cite{Colangeli2019}. See also \cite{Thieu2019} for a result on  the weak solvability of a deterministic system of parabolic partial differential equations describing the interplay of a mixture of fluids for active--passive populations of pedestrians. A mean-field approach to the pedestrian dynamics has been reported in \cite{Aurell2020}, where the authors provided a system of SDEs of mean-field type modeling pedestrian motion. In particular, their system of SDEs describes a scenario where pedestrians spend time at and move along walls by means of sticky boundaries and boundary diffusion. 

The discussion of the active-passive pedestrian dynamics at the level of SDEs is new and brings in at least a twofold challenge: (i) the evolution system is coupled and (ii) pedestrians have to cross a domain with forbidden regions (the obstacles). Various solution strategies have been already identified for deterministic crowd  evolution equations. We mention here the two ideas which stand out: a granular media approach, where collisions with obstacles are tackled with techniques of non-smooth analysis cf. e.g. \cite{Maury2015}, and a reflection-of-velocities approach as it is done e.g. in \cite{Kimura2019}.  If some level of noise affects the dynamics, then both these approaches fail to be aplicable. On the other hand, there are several results for stochastic differential equations with reflecting boundary conditions, one of them being the seminal  contribution of Skorohod in \cite{Skorohod1961}, where the author provided the existence and uniqueness to one dimensional stochastic equations for diffusion processes in a bounded region. A direct approach to the solution of the reflecting boundary conditions and reductions to the case including nonsmooth ones are reported in \cite{Lions1984}. Extending results by Tanaka, the author of \cite{Saisho1987}  proves the existence and uniqueness of solutions to the Skorohod equation posed in a bounded domain in $\mathbb{R}^d$ where a reflecting boundary condition is applied. In \cite{Dupuis1993}, the authors studied a strong existence and uniqueness to the stochastic differential equations with reflecting boundary conditions for domains that might have conners. In addition, the existence, uniqueness and stability of solutions of multidimensional SDE's with reflecting boundary conditions has been provided in \cite{Slominski1993}, where the author obtained results on the existence and uniqueness of strong and weak solutions to the SDE for any driving semimartingale and in a more general domain. 

The main question we ask in this paper is: Can we frame our crowd dynamics model as a well-posed system of stochastic evolution equations of Skorohod type? Provided suitable restrictions on the geometry of the domain,  on the structure of nonlinearites as well as data and parameters, we provide in Section \ref{main} a positive answer to this question. This study opens the possibility of exploring further our system from the numerical analysis perspective so that suitable  algorithms can be designed to produce simulations forecasting the evacuation time based on our model. A couple of follow-up open questions are given in the conclusion; see Section \ref{conclusion}.

\section{Setting of the model equations}\label{Model}
\subsection{Geometry}\label{geometry}

\begin{figure}[h!]
	\centering
	\includegraphics[width=1.0\textwidth]{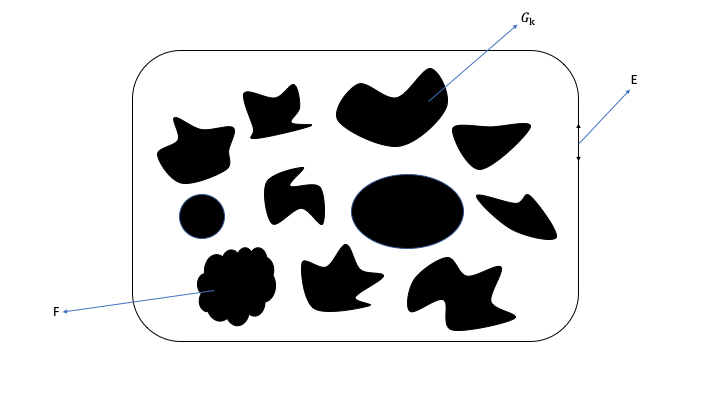}
	\caption{Basic geometry for our active-passive pedestrian model. Initially, pedestrians occupy some random position within a geometry with obstacles $G_k$. Because of the presence of the fire  $F$, and presumably also of smoke, they wish  to evacuate via the exit door $E$ while avoiding the obstacles $G_k$ and the fire $F$.}
	\label{fig:fig1}
\end{figure}
We consider a two dimensional domain, which we refer to as $\Lambda$.  As a building geometry, parts of the domain are filled with obstacles. Their collection is denoted by $G = \bigcup_{k=1}^{N_{\text{obs}}} G_k$, for all $k \in \{1, \ldots, N_{\text{obs}} \in \mathbb{N}\}$. A fire $F$ is introduced somewhere in this domain and is treated in this context as an obstacle for the motion of the crowd. Moreover, the domain has the exit denoted by $E$. Our domain represents the environment where the crowd of pedestrians is located. The crowd tries to find the fastest way to the exit, avoiding the obstacles and the fire.
Let $D:=\Lambda\backslash(G \cup E \cup F) \subset \mathbb{R}^2$ with the boundary $\partial D$ such that $\partial \Lambda \cap \partial G_k = \emptyset$, $\partial \Lambda \cap \partial G_k = \emptyset$ and $F \cap G_k=\emptyset$, see in Section \ref{assumption} for our assumption of the regularity of the involve set. We also denote $S=(0,T)$ for some $T \in \mathbb{R}_{+}$. We refer to $\bar{D}\times S$ as $D_T$, note that $\bar{D}$ denotes the closure of $D$. Furthermore, $N_A$ is the total number of active agents, $N_P$ is the total number of passive particles with $N:=N_A+N_P$ and $N_A, N_P, N \in \mathbb{N}$. 
%It will be clear in Section 5 why we need this regularity of $\partial \Omega$. 

\subsection{Active population}\label{ap}

For $i \in \{1,\ldots, N_A\}$ and $t \in S$, let $\textbf{x}_{a_i}$ denote the position of the pedestrian $i$ belonging to the active population at time $t$. We assume that the dynamics of active pedestrians is governed by  
\begin{align}\label{active}
\begin{cases}
\frac{d \textbf{x}_{a_i}(t)}{dt} = -\Upsilon(s(\textbf{x}_{a_i}(t))\frac{\nabla \phi(\textbf{x}_{a_i}(t))}{|\nabla \phi(\textbf{x}_{a_i}(t))|}\left(p_{\text{max}} - p(\textbf{x}_{a_i}(t), t)\right),\\
\textbf{x}_{a_{i}}(0) = \textbf{x}_{a_{i0}},
\end{cases}
\end{align}
where $\textbf{x}_{a_{i0}}$ represents the initial configuration of active pedestrians inside $\bar{D}$. In \eqref{active}, $\nabla \phi$ is the minimal motion path of the distance between particle positions $\textbf{x}_{a_i}$ and the exit location $E$ (it solves the Eikonal-like equation). The function $\phi(\cdot)$ encodes the familiarity with the geometry; see also \cite{Yang2016} for a related setting. We refer to as the motion planning map.  In this context,  $p(\textbf{x},t)$ is the local discomfort (a realization of the social pressure) so that  
\begin{align}\label{pressure}
p(\textbf{x}(t),t)&=\mu(t)\int_{D \cap B(\textbf{x}(t),\tilde{\delta})} \sum_{j=1}^{N}\delta(y-\textbf{x}_{c_j}(t))dy,
\end{align}
for $\{\textbf{x}_{c_j}\}:= \{\textbf{x}_{a_i}\} \cup \{\textbf{x}_{b_k}\}$ for $i \in \{1, \dots, N_A\}, k \in \{1, \dots, N_P\}, j \in \{1, \ldots, N_A + N_P\}$ and $\textbf{x}_{b_k}$ represents the position of the pedestrian $k$ belonging at time $t$ to the passive population.
In \eqref{pressure}, $\delta$ is the Dirac (point) measure, $\mu(t)$ is a finite measure and $B(\textbf{x},\tilde{\delta})$ is a ball center $\textbf{x}$ with small enough radius $\tilde{\delta}$ such that $\tilde{\delta} >0$. Hence, the discomfort $p(\textbf{x},t)$ represents a finite measure on the set $D \cap B(\textbf{x},\tilde{\delta})$. In addition, we assume the following structural relation between the smoke extinction and the walking speed (see in \cite{Jin1997} and \cite{Ronchi2019}) as a function 
$\Upsilon: \mathbb{R}_{+}^2 \longrightarrow \mathbb{R}^2$ such that
\begin{align}
\Upsilon(\textbf{x}) &= -\zeta \textbf{x} + \eta,
\end{align}
where $\zeta, \eta$ are given real positive numbers.
The dependence of the model coefficients on the local smoke density is marked via a smooth relationship with respect to an  a priori given function $s(\textbf{x}(t))$ describing the distribution of smoke inside the geometry at position $x$ and time $t$.

\subsection{Passive population}

For $k \in \{1,\ldots, N_P\}$ and $t \in S$, let $\textbf{x}_{p_k}$ denote the position of the pedestrian $k$ belonging at time $t$ to the passive population. The dynamics of the passive pedestrians is described  here as a system of stochastic differential equations as follows:
\begin{align}\label{passive}
\begin{cases}
d \textbf{x}_{p_k}(t) = \sum_{j=1}^N \frac{\textbf{x}_{c_j} - \textbf{x}_{p_k}}{\epsilon + |\textbf{x}_{c_j} - \textbf{x}_{p_k}|}\omega(|\textbf{x}_{c_j} - \textbf{x}_{p_k}|, s(\textbf{x}_{p_k}(t)))dt + \beta(s(\textbf{x}_{p_k}(t)))dB(t),\\
\textbf{x}_{p_{k}}(0) = \textbf{x}_{p_{k0}},
\end{cases}
\end{align}
where $\textbf{x}_{p_{k0}}$ represents the initial configuration of passive pedestrians inside $\bar{D}$ and $\epsilon>0$. In \eqref{passive}, $\omega$ is a Morse-like potential function \footnote{In general, the interactions between particles are defined in the senses of a pairwise potential, i.e. repulsive at short ranges and attractive at longer ranges. Therefore, one of the typical choices is the exponentially decaying Morse potential, which is known to reproduce certain types of collective motion observed in nature, particularly aligned flocks and rotating mills (see e.g. in \cite{Bernoff2011}  and \cite{Carrillo2013}).} (see e.g. Ref. \cite{Carrillo2013} for a setting where a similar potential has been used). We take $\omega: \mathbb{R}\times \mathbb{R}^2 \longrightarrow \mathbb{R}^2$ to be
\begin{align}
\omega(x,y) = -\beta(y) \left(-C_A e^{-\frac{x}{\ell_A} }+ C_R e^{-\frac{x}{\ell_R}}\right) \text{ for } x, y \in \mathbb{R}\times \mathbb{R}^2,
\end{align} 
while $C_A >0, C_R >0$ are the attractive and repulsive strengths and $\ell_A >0, \ell_R>0$ are the respective length scales for attraction and repulsion. Moreover, the coefficient $\beta$ is a regularized version of the Heaviside step function. As in Subsection \ref{ap}, the dependence of the model coefficients on the smoke is marked via a smooth relationship with respect to an  a priori given function $s(x,t)$ describing the distribution of smoke inside the geometry at position $x$ and time $t$.
Note that the passive  pedestrians do not posses any knowledge on the geometry of the walking space. In particular, the location of the exit is unknown; see \cite{CristianiD2019} for a somewhat related context.

\section{Technical preliminaries and assumptions}\label{pre-assumptions}
\subsection{Technical preliminaries}

We recall the classical Ascoli-Arzel\`{a} Theorem:\\
A family of functions $U \subset C(\bar{S};\mathbb{R}^d)$ is relatively compact (with respect to the uniform topology) if
\begin{enumerate}
	\item[i.] for every $t \in \bar{S}$, the set $\{f(t);f \in U\}$ is bounded.
	\item[ii.] for every $\varepsilon>0$ and $t,s \in \bar{S}$, there is $\bar{\delta} > 0$ such that
	\begin{eqnarray}
	|f(t)-f(s)| \leq \varepsilon,
	\end{eqnarray}
	whenever $|t - s| \leq \bar{\delta}$ for all $f \subset U$.
\end{enumerate}
For a function $f: \bar{S} \to \mathbb{R}^d$,  we introduce the definition of H\"{o}lder seminorms as
\begin{eqnarray}\label{holder_norm}
[f]_{C^\alpha(S; \mathbb{R}^d)} = \sup_{t\neq s; t,s \in \bar{S}}\frac{|f(t)-f(s)|}{|t-s|^{\alpha}}, 
\end{eqnarray}
for $\alpha \in (0,1)$ and the supremum norm as
\begin{eqnarray}\label{infty_norm}
\|f\|_{L^\infty(S;\mathbb{R}^d)} = \mathrm{ess}\sup_{t \in \bar{S}}|f(t)|.
\end{eqnarray}
We refer to \cite{Adams03} and \cite{Gilbarg1977} for more details on the used function spaces.

Using  Arzel\`{a}-Ascoli Theorem based on the facts:
\begin{enumerate}
	\item[i'.] there is $M_1>0$ such that $\|f\|_{L^\infty(S;\mathbb{R}^d)}\leq M_1$ for all $f \in U$,
	\item[ii'.] for some $\alpha \in (0,1)$, there is an $M_2>0$ such that $[f]_{C^\alpha(\bar{S}; \mathbb{R}^d)} \leq M_2$ for all $f \in U$,
\end{enumerate}
we infer that the set
\begin{eqnarray}\label{relativelycompactKMP}
K_{M_1M_2} = \left\{f\in C(\bar{S};\mathbb{R}^d); \|f\|_{L^\infty(S;\mathbb{R}^d)}\leq M_1, [f]_{C^\alpha(\bar{S}; \mathbb{R}^d)} \leq M_2\right\}
\end{eqnarray}
is relatively compact in $C(\bar{S}; \mathbb{R}^d)$.

For $\alpha \in (0,1)$, $T>0$ and $p>1$, the space 
$W^{\alpha,p}(S;\mathbb{R}^d)$ is defined as the set of all $f \in L^p(S;\mathbb{R}^d)$ such that
\begin{eqnarray}
[f]_{W^{\alpha,p}(S;\mathbb{R}^d)}:= \int_{0}^{T}\int_{0}^{T}\frac{|f(t) - f(s)|^p}{|t-s|^{1 + \alpha p}}dtds < \infty. \nonumber
\end{eqnarray}
This space is endowed with the norm
\begin{eqnarray}
\|f\|_{W^{\alpha,p}(S;\mathbb{R}^d)} = \|f\|_{L^p(S;\mathbb{R}^d)} + [f]_{W^{\alpha,p}(S;\mathbb{R}^d)}.\nonumber
\end{eqnarray}

Moreover, we have the following embedding
\begin{eqnarray}
W^{\alpha,p}(S; \mathbb{R}^d) \subset C^\gamma(\bar{S};\mathbb{R}^d) \quad \textrm{ for } \alpha p - \gamma >1\nonumber
\end{eqnarray}
and $[f]_{C^\gamma(S; \mathbb{R}^d)} \leq C_{\gamma,\alpha,p}\|f\|_{W^{\alpha,p}(S; \mathbb{R}^d)}$. Relying on the Ascoli-Arzel\`{a} Theorem, we have the following situation:
\begin{enumerate}
	\item[ii''.] for some $\alpha \in (0,1)$ and $p>1$ with $\alpha p>1$, there is $M_2>0$ such that $[f]_{W^{\alpha,p}(S;\mathbb{R}^d)} \leq M_2$ for all $f \in U$. 
\end{enumerate}
If $\textrm{i'}$ and $\textrm{ii''}$ hold, then the set
\begin{eqnarray}\label{relativelycompactKMP_2}
K'_{M_1M_2} = \left\{f \in C(\bar{S}; \mathbb{R}^d); \|f\|_{L^\infty(S;\mathbb{R}^d)} \leq M_1, [f]_{W^{\alpha,p}(S;\mathbb{R}^d)} \leq M_2\right\}
\end{eqnarray}
is relatively compact in $C(\bar{S}; \mathbb{R}^d)$, if $\alpha p>1$ (see e.g. \cite{Flandoli95}, \cite{Colangeli2019}).

\subsection{Assumptions}\label{assumption}

To be successful with our mathematical analysis, we rely on the following assumptions: 

\begin{itemize}
	\item[($\text{A}_1$)] The functions $b: D_T\times D_T \longrightarrow \mathbb{R}^2\times \mathbb{R}^2,$ and $\sigma: D_T \times D_T \longrightarrow \mathbb{R}^{2\times 2} \times \mathbb{R}^{2\times 2}$ satisfy 
	$|\sigma(x, t)| \leq L$, $|b(x, t)| \leq L$ for all $x \in \bar{D} \times \bar{D}$ and $t \in S$. Here $\sigma$ and $b$ incorporate the right-hand sides of the SDEs (\ref{active}) and (\ref{passive}) in their respective dimensionless form indicated in Appendix   \ref{nondim}.
	\item[($\text{A}_2$)] $p_{\text{max}} = N|\bar{D}|$, where $|\bar{D}|$ denotes the area of $\bar{D}$.
	\item[($\text{A}_3$)] $\Upsilon, \omega, \beta\in C^1( \mathbb{R}^2)$.
	\item[($\text{A}_4$)] $s \in  C^1(\bar{S}; \mathbb{R}^2)$.
	\item[($\text{A}_5$)] $\partial D$ is $C^{2,\alpha}$ with $\alpha \in (0,1)$. 
\end{itemize}

It is worth mentioning that assumptions ($\text{A}_1$) and ($\text{A}_2$) correspond to the modeling of the situation, while ($\text{A}_3$)-($\text{A}_5$) are of technical nature, corresponding to the type of solution we are searching for; clarifications in this direction are given in the next Section.
\section{The Skorohod equation}\label{Skorohod}
\subsection{Concept of solution}

Take $x \in \partial D$ arbitrarily fixed. We define the set $\mathcal{N}_x$ of inward normal unit vectors at $x \in \partial D$ by
\begin{align}
\mathcal{N}_{x}&= \cup_{r > 0}\mathcal{N}_{x,r}, \nonumber\\
\mathcal{N}_{x,r}&=\left\{\textbf{n} \in \mathbb{R}^2: |\textbf{n}|=1, B(x-r\textbf{n},r) \cap D = \emptyset \right\},
\end{align}
where $B(z,r) = \{y \in \mathbb{R}^2: |y-z| < r\}, z \in \mathbb{R}^2, r>0$. Mind that, in general, it can happen that $\mathcal{N}_x = \emptyset$. In this case, the uniform exterior sphere condition is not satisfied (see, for instance,  the examples in \cite{Cholaquidis2016}, Fig. 5 and in \cite{Choulli2016}, page $4$). %{\bf THOA: WHEN can this happen? GIVE an example - talk also to Martin Lind /look around for which domains the uniform sphere condition does not hold?} 
\newline
We complement our list of  assumptions  ($\text{A}_1$)--($\text{A}_5$) with three specific conditions on the geometry of the domain $D$:

\begin{itemize}
	\item[($\text{A}_6$)] (Uniform exterior sphere condition). There exists a constant $r_0 > 0$ such that
	$$\mathcal{N}_x = \mathcal{N}_{z,r_0} \neq \emptyset \text{ for any } z \in \partial D.$$
	\item[($\text{A}_7$)] There exits constants $\delta > 0$ and $\delta' \in [1, \infty)$ with the following property: for any $x \in \partial D$ there exists a unit vector $\textbf{l}_x$ such that 
	$$\langle\textbf{l}_x,\textbf{n}\rangle \geq 1/\delta' \text{ for any } \textbf{n} \in \bigcup_{y \in B(x, \delta) \cap \partial D} \mathcal{N}_y,$$ 
	where $\langle\cdot, \cdot\rangle$ denotes the usual inner product in $\mathbb{R}^2$.
	\item[($\text{A}_8$)]  There exist $\delta''>0$ and $\nu>0$ such that for each $x_0 \in \partial D$ we can find a function $f \in C^2(\mathbb{R}^2)$ satisfying 
	\begin{align}
	\langle y-x, \textbf{n}\rangle + \frac{1}{\nu}\langle \nabla f(x), \textbf{n}\rangle|y-x|^2 \geq 0,
	\end{align}
	for any $x \in B(x_0, \delta'') \cap \partial D, y \in B(x_0, \delta'') \cap \partial \bar{D}$ and $\textbf{n} \in \mathcal{N}_x$.
\end{itemize}

%Let $W(\mathbb{R}^2)$ and $W(D)$ be the space of continuous paths in $\mathbb{R}^2$ and $D$, respectively. 
The following relation is called {\em the Skorohod equation}: Find $(\xi, \phi) \in C(\bar{S}, \mathbb{R}^2)$ such that
\begin{align}\label{deterministicS}
\xi(t)= w(t) + \phi(t),
\end{align}
where $w \in C(\bar{S}, \mathbb{R}^2)$ is given so that $w(0) \in \bar{D}$. 
The solution of \eqref{deterministicS} is a pair $(\xi,\phi)$, which satisfies the following two conditions:
\begin{itemize}
	\item[(a)] $\xi \in C(\bar{S}, \bar{D})$; 
	\item[(b)] $\phi \in C(\bar{S})$ with bounded variation on each finite time interval satisfying $\phi(0)=0$ and 
	\begin{align}
	\phi(t) &= \int_0^t\textbf{n}(y)d|\phi|_y,\nonumber\\
	|\phi|_{t} &= \int_0^t\mathbbm{1}_{\partial D}(\xi(y)) d|\phi|_y,
	\end{align}
	where 
	\begin{align}\label{BV-func}
	\textbf{n}(y) &\in \mathcal{N}_{\xi(y)} \text{ if } \xi(y) \in \partial D,\\
	|\phi|_t &= \text{ total variation of } \phi \text{ on } [0,t] \nonumber\\
	&=\sup_{\mathcal{T} \in \mathcal{G}([0,t])} \sum_{k=1}^{n_{\mathcal{T}}} |\phi(t_k) - \phi(t_k-1)|.
	\end{align}
	In \eqref{BV-func}, we denote by $\mathcal{G}([0,t])$ the family of all partitions of the interval $[0,t]$ and take a partition $\mathcal{T} = \{0=t_0<t_1< \ldots <t_n=t\} \in \mathcal{G}([0,t])$. The supremum in \eqref{BV-func} is taken over all partitions of type $0=t_0<t_1< \ldots <t_n=t$. 
\end{itemize}
Conditions (a) and (b) guarantee that $\xi$ is a \textit{reflecting process} on $\bar{D}$.

It is easily seen from the definition that
$$\xi_1(t) = w_1(t), \ldots, \xi_{d-1}(t) = \xi_{d-1},$$
and 
$$\xi_d(t)) = w_d(t)+\phi(t), \int_0^t\mathbbm{1}_{\xi_d(y) \notin \partial D}d|\phi|_y.$$
We define a multidimensional Skorohod's map $\Gamma: C(\bar{S}) \longrightarrow C(\bar{S})$ such that
\begin{align}
\Gamma w(t) = \Gamma(w_1, \ldots, w_d)(t) = (w_1(t), \ldots, w_{d-1}, \Gamma w_d(t)).
\end{align}
Hence, the pair $(\xi_d, \phi)$ is the exact solution of the one-dimensional Skorohod problem $\xi_d$. Therefore, it holds
\begin{align}\label{reflection-func}
\phi(t) = -\min_{y \in [0,t]}\{w_d(y),0\}, \quad \ \xi_d(t) =  w_d(t) - \min_{y \in [0,t]}\{w_d(y),0\} = \Gamma w_d(t).
\end{align}
The multidimensional Skorohod's map $\Gamma$ satisfies the Lipschitz condition in a space of continuous functions.
\begin{theorem}\label{deterministic-theo}
	Assume conditions ($\text{A}_6$) and ($\text{A}_7$). Then for any $w \in C(\bar{S}, \mathbb{R}^2)$ with $w(0) \in \bar{D}$, there exists a unique solution $\xi(t,w)$ of the equation \eqref{deterministicS} such that $\xi(t,w)$ is continuous in $(t,w)$.
\end{theorem}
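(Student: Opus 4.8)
The plan is to follow the classical Tanaka--Saisho programme for the deterministic Skorohod problem: handle uniqueness by a Gronwall argument and existence by approximation plus compactness, with both halves driven entirely by the geometric consequences of ($\text{A}_6$) and ($\text{A}_7$). The cornerstone is an elementary one-sided inequality extracted from the uniform exterior sphere condition ($\text{A}_6$): if $x \in \partial D$, $\textbf{n} \in \mathcal{N}_x$ and $y \in \bar{D}$, then expanding $|y - (x - r_0\textbf{n})|^2 \geq r_0^2$ gives
\[
\langle y - x, \textbf{n}\rangle \geq -\frac{1}{2r_0}|y - x|^2 .
\]
This bound is what replaces convexity of $D$ throughout the argument, so I would establish it first.

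I would dispatch uniqueness next, as it is the cleaner half. Given two solutions $(\xi_1,\phi_1)$ and $(\xi_2,\phi_2)$ for the same $w$, set $\eta = \xi_1 - \xi_2 = \phi_1 - \phi_2$, since the $w$-terms cancel and $\eta(0)=0$. Writing $|\eta(t)|^2$ as a Stieltjes integral against $d\phi_1 - d\phi_2$ and using $d\phi_i = \textbf{n}_i\,d|\phi_i|$ supported on $\{\xi_i \in \partial D\}$, the inequality above applied at each boundary time yields $\langle \eta(s), d\phi_1(s)\rangle \leq \frac{1}{2r_0}|\eta(s)|^2\,d|\phi_1|_s$ and symmetrically for $\phi_2$, so that
\[
|\eta(t)|^2 \leq \frac{1}{r_0}\int_0^t |\eta(s)|^2\,d\big(|\phi_1| + |\phi_2|\big)_s .
\]
Because both total variations are finite on $[0,T]$ by condition (b), the Stieltjes form of Gronwall's inequality forces $\eta \equiv 0$, hence $\xi_1 = \xi_2$ and $\phi_1 = \phi_2$.

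For existence I would construct approximate solutions by a time-discretization (projection) scheme: partition $[0,T]$ with vanishing mesh, let a polygonal path follow the increments of $w$ on each subinterval, project it back onto $\bar{D}$ along an inward normal whenever it exits, and record the cumulative projection displacements as $\phi^n$. The crux is a uniform a priori bound on the total variation $|\phi^n|_T$, and this is exactly where ($\text{A}_7$) enters: testing the increments of $\phi^n$ against the admissible direction $\textbf{l}_x$ and exploiting $\langle \textbf{l}_x, \textbf{n}\rangle \geq 1/\delta'$ converts the signed projected displacements into a control of $|\phi^n|_T$ by the oscillation of $w$. Together with the uniform confinement of $\xi^n$ to $\bar{D}$ and the resulting equicontinuity, the Ascoli--Arzel\`a theorem recalled above then extracts a subsequence with $\xi^n \to \xi$ and $\phi^n \to \phi$ uniformly on $\bar{S}$.

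It remains to verify that the limit genuinely solves \eqref{deterministicS}: that $\xi$ stays in $\bar{D}$ (closedness of $\bar{D}$), that $\phi$ has finite variation (lower semicontinuity of total variation under uniform limits), and --- the delicate point --- that $\phi(t) = \int_0^t \textbf{n}(y)\,d|\phi|_y$ with $\textbf{n}(y) \in \mathcal{N}_{\xi(y)}$ together with the support identity $|\phi|_t = \int_0^t \mathbbm{1}_{\partial D}(\xi(y))\,d|\phi|_y$. I expect this passage to the limit to be the main obstacle: one must show the reflection measures $d\phi^n$ converge weakly to a measure concentrated on $\{\xi \in \partial D\}$ and pointing into the closed, upper-semicontinuous normal cone $\mathcal{N}_{\xi(\cdot)}$, which again rests on ($\text{A}_6$)--($\text{A}_7$) to prevent the normals from degenerating as $n \to \infty$. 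Finally, joint continuity in $(t,w)$ follows by repeating the uniqueness computation for two drivers $w^1, w^2$: with $\eta = \xi^1 - \xi^2$ one now carries the extra term $\int_0^t \langle \eta, d(w^1 - w^2)\rangle$, and ($\text{A}_6$)--($\text{A}_7$) with Gronwall deliver a stability estimate of the form $\|\xi^1 - \xi^2\|_{L^\infty(S;\mathbb{R}^2)} \leq C\,\|w^1 - w^2\|_{L^\infty(S;\mathbb{R}^2)}$, with $C$ depending only on $r_0$, $\delta'$ and the uniformly bounded variations; continuity in $t$ is built into the requirement $\xi \in C(\bar{S},\bar{D})$.
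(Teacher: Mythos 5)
You should first note that the paper does not actually prove Theorem \ref{deterministic-theo}: the statement is quoted from the literature and the proof is deferred wholesale to Theorem 4.1 of \cite{Saisho1987}. Your sketch is essentially a faithful reconstruction of the architecture of that cited proof --- the exterior-sphere inequality $\langle y-x,\textbf{n}\rangle \geq -\tfrac{1}{2r_0}|y-x|^2$ as the surrogate for convexity, a Stieltjes--Gronwall argument for uniqueness (legitimate there, since $\eta=\phi_1-\phi_2$ \emph{is} of bounded variation), a discretized/projected approximation whose total variation is controlled via ($\text{A}_7$), and Ascoli--Arzel\`a plus identification of the limiting reflection measure. So as a roadmap it is the right one, and it is the same route the paper implicitly relies on through the citation.

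Two points are nevertheless genuinely shaky as written. First, in the continuity-in-$w$ step the pairing $\int_0^t\langle \eta, d(w^1-w^2)\rangle$ is not defined, because $w^1-w^2$ is merely continuous, not of bounded variation; one must instead expand $|\eta(t)|^2$ using only the BV parts $d\phi_1-d\phi_2$ and absorb the $w$-difference algebraically (this is the content of the stability estimates in \cite{Lions1984} and \cite{Saisho1987}). Carried out correctly, this yields a bound of the shape $|\xi^1(t)-\xi^2(t)|^2 \leq \bigl(\|w^1-w^2\|_\infty^2 + C\|w^1-w^2\|_\infty(1+|\phi_1|_t+|\phi_2|_t)\bigr)\exp\bigl(C(|\phi_1|_t+|\phi_2|_t)\bigr)$, which gives continuity of the solution map on sets where the variations are uniformly bounded, but \emph{not} the global Lipschitz estimate $\|\xi^1-\xi^2\|_{L^\infty} \leq C\|w^1-w^2\|_{L^\infty}$ you assert; fortunately continuity is all the theorem claims, and the required uniform variation bound again comes from ($\text{A}_7$). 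Second, the two steps you yourself flag as delicate --- the uniform bound on $|\phi^n|_T$ in terms of the oscillation of $w$, and the proof that the limiting measure $d\phi$ is carried by $\{\xi\in\partial D\}$ with directions in the upper-semicontinuous cone $\mathcal{N}_{\xi(\cdot)}$ --- are precisely the technical core of Saisho's argument and are only announced, not executed, in your proposal. As a self-contained proof it is therefore incomplete; as an account of why the appeal to \cite{Saisho1987} under ($\text{A}_6$)--($\text{A}_7$) is the correct one, it is accurate.
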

For the proof of this Theorem, we refer the reader to Theorem $4.1$ in \cite{Saisho1987}.

To come closer to the model equations for active-passive pedestrian dynamics described in Section \ref{Model},  we introduce the mappings
$$b: D_T \times D_T \longrightarrow \mathbb{R}^2 \times \mathbb{R}^2 , \quad \sigma: D_T \times D_T \longrightarrow \mathbb{R}^{2\times 2} \times \mathbb{R}^{2\times 2}$$
and consider the Skorohod-like system on the probability space $(\Omega, \mathcal{F}, P)$
\begin{align}\label{main-skorohod}
dX_t = b(X_t(t))dt + \sigma(X_t(t))dB(t) + d\Phi_t.
\end{align}
Note that \eqref{main-skorohod} can be written component-wise as
\begin{align*}
dX_t^{(I)} = b(X_t(t))_{I}dt + \sum_{J=1}^2\sigma_{IJ}(X_t(t))dB^{(J)}(t) + d\Phi_t^{(I)} \text{ for } 1 \leq I \leq 4.
\end{align*}
with
\begin{align}\label{initial-cond}
X(0)=X_0 \in \bar{D},
\end{align}
where the inital value $X_0$ is assumed to be an $\mathcal{F}_0-$measurable random variable and $B(t)$ is a $2-$dimensional $\mathcal{F}_t-$Brownian motion with $B(0)=0$. Here, $\{\mathcal{F}_t\}$ is a filtration such that $\mathcal{F}_0$ contains all $P-$negligible sets and $\mathcal{F}_t= \cap_{\varepsilon>0}\mathcal{F}_{t+\varepsilon}$. The structure of \eqref{main-skorohod} is provided in Section \ref{proof-theo1}. Similarly to the deterministic case, we can now define the following concept of solutions to \eqref{main-skorohod}. More details of the structure of \eqref{main-skorohod}-\eqref{initial-cond}  are listed in Section \ref{proof-theo1}.
\begin{defn}\label{def}
	A pair $(X_t,\Phi_t)$ is called solution to \eqref{main-skorohod}--\eqref{initial-cond} if the following conditions hold:
	
	\begin{itemize}
		\item[(i)] $X_t$ is a $\bar{D}-$valued $\mathcal{F}_t-$adapted continuous process;
		\item[(ii)] $\Phi(t)$ is an $\mathbb{R}^2-$valued $\mathcal{F}_t-$adapted continuous process with bounded variation on each finite time interval such that $\Phi(0)=0$ with
		\begin{align}
		\Phi(t) &= \int_0^t\textbf{n}(y)d|\Phi|_y,\nonumber\\
		|\Phi|_{t} &= \int_0^t\mathbbm{1}_{\partial D}(X(y)) d|\Phi|_y.
		\end{align} 
		\item[(iii)] $\textbf{n}(s) \in \mathcal{N}_{X(s)} \in \partial D$.
	\end{itemize}
\end{defn}
Note that the Definition \ref{def} ensures that $X_t$ entering \eqref{main-skorohod} is a reflecting process on $\bar{D}$.

\section{Well-posedness of Skorohod-like system}\label{main}

In this section, we establish the well-posedness of the Skorohod-like system by showing the existence, uniqueness and stability of solutions in the sense of Definition \ref{def} to the problem \eqref{main-skorohod}--\eqref{initial-cond}. 

\subsection{Statement of the main results}

The main results of this paper are stated in Theorem \ref{main-theo1}, Theorem \ref{main-theo2} and Theorem \ref{main-stabi}. In the frame of this paper, the focus lies on ensuring the well-posedness of Skorohod solutions to our crowd dynamics problem.

\begin{theorem}[Existence]\label{main-theo1}
	Assume that $(\text{A}_1)$-$(\text{A}_7)$ hold. There exists at least a strong solution to the Skorohod-like system \eqref{main-skorohod}--\eqref{initial-cond} in the sense of Definition \ref{def}.
\end{theorem}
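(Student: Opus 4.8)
The plan is to construct a strong solution to the Skorohod-like system \eqref{main-skorohod}--\eqref{initial-cond} by a Picard-type iteration combined with a tightness/compactness argument, and then to identify the limit as a reflecting process using the deterministic Skorohod map $\Gamma$ from Theorem \ref{deterministic-theo}. The starting point is to rewrite \eqref{main-skorohod} in the decoupled form
\begin{align}\label{plan-decomp}
X_t = \Gamma\!\left(X_0 + \int_0^\cdot b(X_s(s))\,ds + \int_0^\cdot \sigma(X_s(s))\,dB(s)\right)(t),
\end{align}
so that the reflection term $\Phi_t$ is absorbed into the map $\Gamma$ and one only has to solve a fixed-point problem for the driving semimartingale $w(t) = X_0 + \int_0^t b\,ds + \int_0^t \sigma\,dB$. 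Because Theorem \ref{deterministic-theo} guarantees that $\Gamma$ is well defined under $(\text{A}_6)$--$(\text{A}_7)$ and is Lipschitz in the uniform topology on $C(\bar S)$, this reformulation is legitimate and shifts the analytic burden onto the driving noise.

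Next I would set up the Picard scheme $X_t^{(n+1)} = \Gamma(w^{(n)})(t)$ with $w^{(n)}(t) = X_0 + \int_0^t b(X_s^{(n)}(s))\,ds + \int_0^t \sigma(X_s^{(n)}(s))\,dB(s)$, starting from the constant path $X^{(0)} \equiv X_0$. Using the boundedness of $b$ and $\sigma$ from $(\text{A}_1)$ together with the Burkholder--Davis--Gundy inequality, I would derive uniform-in-$n$ moment bounds $\mathbb{E}\,\|w^{(n)}\|_{L^\infty(S;\mathbb{R}^2)}^p \leq M_1$ and, more importantly, uniform fractional Sobolev bounds $\mathbb{E}\,[w^{(n)}]_{W^{\alpha,p}(S;\mathbb{R}^2)}^p \leq M_2$ for suitable $\alpha \in (0,1)$, $p>1$ with $\alpha p > 1$. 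The drift contributes a Lipschitz-in-time (hence trivially $W^{\alpha,p}$) piece, while the stochastic integral is handled by the standard estimate $\mathbb{E}\,|\int_s^t \sigma\,dB|^p \leq C\,L^p\,|t-s|^{p/2}$, which feeds directly into the double integral defining $[\cdot]_{W^{\alpha,p}}$ and yields finiteness precisely when $\alpha < 1/2$. This places the laws of the $w^{(n)}$, and hence (by the continuity of $\Gamma$) of the $X^{(n)}$, inside the relatively compact set $K'_{M_1 M_2}$ of \eqref{relativelycompactKMP_2}, so the family of laws is tight on $C(\bar S;\mathbb{R}^2)$.

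Having tightness, I would invoke Prokhorov's theorem to extract a weakly convergent subsequence and then apply Skorohod's representation theorem to realize the convergence almost surely on a new probability space, passing to the limit in the approximating equations to produce a solution. The continuity of $\Gamma$ in $(t,w)$ from Theorem \ref{deterministic-theo} ensures that the reflection decomposition is preserved in the limit, so the limiting pair $(X_t,\Phi_t)$ satisfies conditions (i)--(iii) of Definition \ref{def}, with the geometric conditions $(\text{A}_6)$--$(\text{A}_8)$ guaranteeing that the inward-normal structure $\textbf{n}(y) \in \mathcal{N}_{X(y)}$ and the boundary-support property of $|\Phi|$ carry over. A final point is to upgrade this to a \emph{strong} solution as claimed: here I would either invoke a Yamada--Watanabe-type argument (pathwise uniqueness, to be established in Theorem \ref{main-theo2}, combined with weak existence) or exploit the explicit Lipschitz continuity of $\Gamma$ to run a genuine contraction directly on the original filtered space.

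The main obstacle I anticipate is twofold. First, the coefficient $b$ incorporates the discomfort term \eqref{pressure}, which is built from Dirac measures and the coupled positions $\{\textbf{x}_{c_j}\}$; making sense of $(\text{A}_1)$'s boundedness hypothesis for such a singular object—and reconciling it with the regularity $b \in$ (at least continuous) needed for the iteration—requires the regularization implicit in the ball-average and the finite-measure interpretation of $p$, and this is where the coupling between the active and passive populations genuinely bites. Second, verifying that the limiting reflection term retains the correct normal-cone structure at the boundary is delicate when $\mathcal{N}_x$ is merely a set-valued object; the conditions $(\text{A}_6)$--$(\text{A}_8)$ (uniform exterior sphere, the uniform cone condition, and the $C^2$ test-function condition of Saisho) are exactly what is needed to control the total variation of $\Phi^{(n)}$ uniformly and to guarantee that limits of inward normals remain inward normals, and carrying this through the Skorohod representation is the technical heart of the argument.
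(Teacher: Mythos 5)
Your tightness machinery is exactly the paper's: uniform $L^\infty$ and $W^{\alpha,p}$ bounds via the boundedness of $b,\sigma$ from $(\text{A}_1)$ and the Burkholder--Davis--Gundy inequality, the compact sets $K'_{M_1M_2}$ of \eqref{relativelycompactKMP_2} with $\alpha<1/2$, $\alpha p>1$, then Prokhorov, the Skorohod representation theorem, and the continuity of the deterministic Skorohod map from Theorem \ref{deterministic-theo} to pass to the limit (this is Lemma \ref{tightness} and Lemma \ref{solu-theo} in the paper). The difference is the approximation scheme, and it is where your argument has a genuine gap. The paper does not use Picard iteration: it uses an Euler-type time discretization, freezing the coefficients at dyadic times $h^n(t)=(k-1)2^{-n}$, so that $X^n_t=(\Gamma Y^n)(t)$ with $Y^n_t=X_0+\int_0^t b(X^n(h^n(y)))\,dy+\int_0^t\sigma(X^n(h^n(y)))\,dB(y)$ is constructed exactly (piecewise, via Theorem \ref{deterministic-theo}) rather than iteratively. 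The point of this choice is that the coefficient argument is the \emph{same} process evaluated at a lagged time, so once $\tilde Y^n\to\tilde Y$ a.s.\ after the Skorohod representation, continuity of $\Gamma$ gives $\tilde X^n\to\tilde X$ and the lag $h^n(y)\to y$ identifies the limit equation directly.

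With your Picard scheme $X^{(n+1)}=\Gamma(w^{(n)})$, tightness only gives a subsequence along which $X^{(n_k)}$ and $X^{(n_k+1)}$ converge (say to $X$ and $X'$), and the limit relation you obtain is $X'=\Gamma\bigl(X_0+\int_0^\cdot b(X_s)\,ds+\int_0^\cdot\sigma(X_s)\,dB(s)\bigr)$; nothing in the compactness argument forces $X=X'$, so you have not produced a fixed point. Closing this requires a contraction, which in turn requires two things you do not have under $(\text{A}_1)$--$(\text{A}_7)$: Lipschitz continuity of $b$ and $\sigma$ (assumption $(\text{A}_1)$ only gives boundedness; Lipschitz-type control is invoked only in the uniqueness proof, which additionally needs $(\text{A}_8)$), and Lipschitz continuity of the Skorohod map $\Gamma$ on the multiply connected domain $D$. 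The paper's explicit Lipschitz/minimum formula \eqref{reflection-func} is the one-dimensional half-space reduction; for the general domain, Theorem \ref{deterministic-theo} (Saisho) delivers only \emph{continuity} of $w\mapsto\xi(\cdot,w)$, which suffices for the Euler scheme but not for a contraction. So either switch to the time-discretization construction or supply a genuinely new argument identifying the limit of the Picard iterates; as written, the existence proof does not close. (Your concluding remark about upgrading weak to strong existence via Yamada--Watanabe is fine in principle, but it leans on Theorem \ref{main-theo2}, which uses $(\text{A}_8)$ --- an assumption the existence theorem does not grant you.)
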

\begin{theorem}[Uniqueness]\label{main-theo2} Assume that $(\text{A}_1)$-$(\text{A}_8)$ hold.
	There is a unique strong solution to \eqref{main-skorohod}--\eqref{initial-cond}.
\end{theorem}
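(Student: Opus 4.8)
The plan is to prove pathwise uniqueness and then combine it with the strong existence from Theorem~\ref{main-theo1} to conclude that the strong solution is unique. So let $(X^1,\Phi^1)$ and $(X^2,\Phi^2)$ be two solutions in the sense of Definition~\ref{def}, carried by the same stochastic basis $(\Omega,\mathcal{F},\{\mathcal{F}_t\},P)$, driven by the same Brownian motion $B$, and starting from the same $\mathcal{F}_0$-measurable datum $X_0\in\bar D$; the goal is $P(X^1_t=X^2_t \text{ for all } t\in\bar S)=1$. The mechanism is an It\^o estimate for a weighted squared difference of the two trajectories, in which the weight is tailored from the auxiliary function $f$ of assumption $(\text{A}_8)$ so as to annihilate the singular reflection terms, after which a Gronwall argument closes the estimate. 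That $b$ and $\sigma$ are Lipschitz on $\bar D$ -- which I will use repeatedly -- follows from their boundedness in $(\text{A}_1)$ together with the $C^1$-regularity postulated in $(\text{A}_3)$, since $\bar D$ is bounded.

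Since the function $f=f_{x_0}$ given by $(\text{A}_8)$ is only local, the first step is to localize. As $\partial D$ is compact (it is $C^{2,\alpha}$ by $(\text{A}_5)$), I cover it by finitely many balls $B(x^m_0,\delta''/2)$ and introduce an increasing sequence of stopping times adapted to this finite atlas, so that on each resulting interval either both paths stay in the interior of $D$ -- where the reflection measures $d|\Phi^i|$ vanish and the classical It\^o--Gronwall estimate applies verbatim -- or both stay inside a single chart $B(x^m_0,\delta'')\cap\bar D$. On such a boundary interval I fix the corresponding $f$ and set
\begin{align*}
G_t=\exp\!\Big(-\tfrac{2}{\nu}\big(f(X^1_t)+f(X^2_t)\big)\Big),\qquad Y_t=|X^1_t-X^2_t|^2\,G_t.
\end{align*}
Because each $f\in C^2$ is used only on a bounded set and the cover is finite, $G$ and $1/G$ are bounded between positive constants and $\nabla f,\nabla^2f$ are bounded, with all constants uniform in the chart thanks to the uniformity of $\delta''$ and $\nu$ in $(\text{A}_8)$.

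The decisive step -- and the main obstacle -- is to show that the reflection differentials contribute non-positively to $dY_t$. Writing $Z=X^1-X^2$ and expanding $d(|Z|^2G)$ by It\^o's formula, the part carried by $d|\Phi^1|$ equals
\begin{align*}
G\Big(2\langle Z,\textbf{n}^1\rangle-\tfrac{2}{\nu}\langle\nabla f(X^1),\textbf{n}^1\rangle\,|Z|^2\Big)\,d|\Phi^1|_t,
\end{align*}
where $\textbf{n}^1(t)\in\mathcal{N}_{X^1_t}$ on $\{X^1_t\in\partial D\}$. Applying $(\text{A}_8)$ with $x=X^1_t\in\partial D$, $y=X^2_t$ and $\textbf{n}=\textbf{n}^1$ gives $\langle Z,\textbf{n}^1\rangle\le\tfrac1\nu\langle\nabla f(X^1),\textbf{n}^1\rangle|Z|^2$, so the bracket is $\le 0$; as $G\ge0$ and $d|\Phi^1|_t\ge0$, the whole contribution is non-positive. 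The symmetric computation, using $(\text{A}_8)$ with the roles of $X^1$ and $X^2$ exchanged, disposes of the $d|\Phi^2|$-part in the same way. The exponent $2/\nu$ is chosen exactly so that the weight-derivative of $G$ cancels the factor $2$ in front of $\langle Z,\textbf{n}^i\rangle$; this cancellation is the heart of the proof and is where $(\text{A}_8)$ is indispensable.

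With the reflection terms discarded, the surviving drift and diffusion contributions to $dY_t$ -- including the $\nabla f$- and $\nabla^2f$-corrections produced by differentiating $G$ along the continuous martingale part -- are dominated, via the Lipschitz bounds on $b,\sigma$ and the boundedness of $G,\nabla f,\nabla^2f$, by $C\,Y_t\,dt+dM_t$ for a finite constant $C$ and a local martingale $M$. Taking expectations kills the martingale, and since $Y=0$ at the left endpoint of each localization interval, Gronwall's inequality forces $\mathbb{E}[Y_t]=0$ throughout that interval; as $G$ is bounded below by a positive constant, this yields $X^1=X^2$ almost surely on the interval. Gluing the localization intervals, whose endpoints exhaust $[0,T]$ by path continuity, gives $X^1\equiv X^2$ on $\bar S$. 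Finally, inserting this into the equation and reading $d\Phi^i=dX^i-b(X^i)\,dt-\sigma(X^i)\,dB$ shows $\Phi^1=\Phi^2$. Pathwise uniqueness is thus established, and together with Theorem~\ref{main-theo1} it delivers the unique strong solution.
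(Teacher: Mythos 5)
Your proposal is correct and follows essentially the same route as the paper: pathwise uniqueness via a Lions--Sznitman/Saisho-type Gronwall estimate for an exponentially weighted squared difference, with assumption $(\text{A}_8)$ used precisely to make the reflection differentials $d|\Phi^i|$ contribute non-positively, and the conclusion upgraded to strong uniqueness by combining pathwise uniqueness with the existence result. The only differences are organizational and, if anything, in your favour: you localize in time over a finite boundary atlas with stopping times and take the weight $\exp\left(-\tfrac{2}{\nu}\left(f(X^1_t)+f(X^2_t)\right)\right)$ built from the chart function $f$ of $(\text{A}_8)$, whereas the paper localizes by assuming the supports of $b$ and $\sigma$ lie in a single boundary ball (following Saisho's Lemma 5.3) and writes the weight in terms of the reflection processes and the vector $\textbf{l}$ of $(\text{A}_7)$.
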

\begin{theorem}[Dependence on parameters]\label{main-stabi}
	Assume that $(\text{A}_1)$-$(\text{A}_5)$ hold and 
	\begin{align}
	\lim_{k \to \infty} E(|X_0^k - X_0|^2) = 0.
	\end{align}
	Suppose that $X_t^n \in C(\bar{S}; \bar{D}\times \bar{D})$ solves
	\begin{align}
	\begin{cases}
	dX_t^n = b(X_t^n(t)dt + \sigma(X_t^n(t))dB(t) + d\Phi_t^n,\\
	X^n(0) = X_0^n \in \bar{D},
	\end{cases}
	\end{align}
	where $X_0^n \in \bar{D}$ is given.
	Then
	\begin{align}
	\lim_{k \to \infty} E(\max_{0 \leq t \leq T}|X_t^n - X_t|^2) = 0,
	\end{align}
	where $X_t \in C(\bar{S}; \bar{D}\times \bar{D})$ is the unique solution of
	\begin{align}
	\begin{cases}
	dX_t = b(X_t(t)dt + \sigma(X_t(t))dB(t) + d\Phi_t,\\
	X(0) = X_0 \in \bar{D},
	\end{cases}
	\end{align}
\end{theorem}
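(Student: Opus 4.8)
The plan is to prove the stability result (Theorem \ref{main-stabi}) by an energy estimate on the difference process $X_t^n - X_t$, combined with a key structural property of the Skorohod reflection term: its monotonicity. First I would set $Z_t^n := X_t^n - X_t$ and $\Psi_t^n := \Phi_t^n - \Phi_t$, so that
\begin{align}
dZ_t^n = \bigl(b(X_t^n) - b(X_t)\bigr)\,dt + \bigl(\sigma(X_t^n) - \sigma(X_t)\bigr)\,dB(t) + d\Psi_t^n, \nonumber
\end{align}
with initial datum $Z_0^n = X_0^n - X_0$. Applying It\^o's formula to $|Z_t^n|^2$ yields
\begin{align}
|Z_t^n|^2 = |Z_0^n|^2 + 2\int_0^t \langle Z_s^n, b(X_s^n)-b(X_s)\rangle\,ds + \int_0^t |\sigma(X_s^n)-\sigma(X_s)|^2\,ds \nonumber
\end{align}
plus a martingale term and, crucially, the reflection contribution $2\int_0^t \langle Z_s^n, d\Psi_s^n\rangle$. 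The essential point is that under the uniform exterior sphere condition $(\text{A}_6)$ together with $(\text{A}_7)$, the reflection terms satisfy a one-sided (quasi-monotone) inequality $\langle X_s^n - X_s, d\Phi_s^n - d\Phi_s\rangle \le C|X_s^n - X_s|^2\,(d|\Phi^n|_s + d|\Phi|_s)$; this is the analogue of the monotonicity used by Saisho and by Lions--Sznitman and is what controls the boundary pushing of the two reflected processes against one another.

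Next I would take expectations to kill the martingale term, use assumption $(\text{A}_1)$ (boundedness, and the implied Lipschitz continuity of $b$ and $\sigma$ coming from $(\text{A}_3)$--$(\text{A}_4)$ regularity) to bound the drift and diffusion differences by $C\,E|Z_s^n|^2$, and handle the reflection term via the monotonicity estimate above. To turn the reflection bound into something usable one needs an a priori control on the total variations $E|\Phi^n|_T$ and $E|\Phi|_T$, which follows from the boundedness of the domain $\bar D$ together with the bounds in $(\text{A}_1)$; these give that $\int_0^t d|\Phi^n|_s + d|\Phi|_s$ has finite, uniformly bounded expectation. After passing to the running maximum and invoking the Burkholder--Davis--Gundy inequality on the martingale part, I would arrive at
\begin{align}
E\Bigl(\max_{0\le s\le t}|Z_s^n|^2\Bigr) \le C\,E|Z_0^n|^2 + C\int_0^t E\Bigl(\max_{0\le r\le s}|Z_r^n|^2\Bigr)\,ds. \nonumber
\end{align}
Grönwall's inequality then gives $E(\max_{0\le t\le T}|X_t^n - X_t|^2) \le C_T\,E|X_0^n - X_0|^2$, and since the right-hand side tends to zero by hypothesis, the conclusion follows.

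The main obstacle, and the step deserving the most care, is the rigorous justification of the monotonicity inequality for the two reflection terms and the uniform total-variation bound that makes it effective. Unlike the single-domain reflection estimate, here one compares two distinct reflected trajectories $X^n$ and $X$ that may touch $\partial D$ at different times and different points, so the inward-normal cones $\mathcal{N}_{X_s^n}$ and $\mathcal{N}_{X_s}$ need not coincide; controlling $\langle X_s^n - X_s, \mathbf{n}^n - \mathbf{n}\rangle$ is precisely where $(\text{A}_6)$ and $(\text{A}_7)$ enter, guaranteeing a quantitative exterior-sphere geometry and a uniform transversality of the reflection directions. I would lean on the estimates established in the proof of Theorem \ref{main-theo2} (which already invokes $(\text{A}_8)$ and the machinery of \cite{Saisho1987}) to supply this inequality, and would note that the finiteness of $E|\Phi|_T$ is obtained by testing the Skorohod equation against a smooth function separating the boundary, again using $(\text{A}_8)$ in the uniqueness argument. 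Once these geometric reflection estimates are in hand, the remainder is a standard Grönwall-type stability argument and presents no further difficulty.
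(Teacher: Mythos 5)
Your argument is correct in outline but follows a genuinely different route from the paper at the one step that matters: the treatment of $\Phi_t^n-\Phi_t$. The paper does \emph{not} use It\^o's formula or the monotonicity of the reflection; it writes the difference of the two integral equations, squares it with the elementary inequality $(a+b+c+d)^2\le 4(a^2+b^2+c^2+d^2)$, and bounds $|\Phi_t^n-\Phi_t|$ \emph{directly} by appealing to the explicit one-dimensional Skorohod map \eqref{reflection-func}, i.e.\ the Lipschitz property $|\Gamma w-\Gamma w'|\le 2\|w-w'\|_\infty$ of the reflection map, which converts the reflection difference into the same drift/diffusion/initial-datum differences already present; Gronwall and the martingale maximal inequality then close the estimate under $(\text{A}_1)$--$(\text{A}_5)$ alone. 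Your route --- It\^o on $|Z_t^n|^2$ plus the Saisho/Lions--Sznitman one-sided inequality for $\langle Z_s^n, d\Phi_s^n-d\Phi_s\rangle$ --- is the more robust one for a genuinely multidimensional domain with obstacles (the paper's formula \eqref{reflection-func} is really a half-space identity, so your approach arguably buys correctness where the paper's is on thinner ice), but it costs you two things you should make explicit. First, the inequality you invoke leaves a term $\int_0^t|Z_s^n|^2\,(d|\Phi^n|_s+d|\Phi|_s)$ on the right, and this cannot be closed by taking expectations and multiplying the two factors' expectations (they are correlated); a bound on $E|\Phi^n|_T$ is not enough, and the standard repair is the exponential weight $\exp\{-\frac1\nu(\cdots)\}$ of $(\text{A}_8)$ exactly as in the paper's proof of Theorem \ref{main-theo2}, which turns the boundary contribution into a nonpositive term that can be dropped. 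Second, this forces you to assume $(\text{A}_6)$--$(\text{A}_8)$, whereas the theorem as stated (and the paper's proof) works with $(\text{A}_1)$--$(\text{A}_5)$ --- although the statement already presupposes uniqueness of $X_t$, which itself needs $(\text{A}_8)$, so this mismatch is as much the paper's as yours. With the exponential-weight step spelled out, your proof goes through.
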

These statements are proven in the next two subsections.

\subsection{Structure of the proof of Theorem \ref{main-theo1}}\label{proof-theo1}

For convenience, we rephrase the solution to the system \eqref{active-nond4} and \eqref{passive-nond4} in terms of the vector $X_t^n$, $n\in \mathbb{N}$, such that
\begin{align}
X_t^n &:= ( X_{a_i}^n(t),X_{b_k}^n(t))^{T} \text{ for } i \in \{1, \dots, N_A\}, k \in \{1, \dots, N_P\} ,\\ F_1(X_t^n,t) &:= \kappa\Upsilon(S(X_{a_i}^n(t)))\frac{\nabla_{X_{a_i}^n} \phi(X_{a_i}^n(t))}{|\nabla_{X_{a_i}^n} \phi(X_{a_i}^n(t))|}( p_{\max}- p(X_{a_i}^n(t), t) ),\\
F_2(X_t^n,t) &:=  \kappa\sum_{j=1}^{N}\frac{X_{c_j}^n(t) - X_{p_k}^n(t) }{\epsilon + |X_{c_j}^n(t) - X_{p_k}^n(t)|}\omega(|X_{c_j}^n(t) - X_{p_k}^n(t)|, S(X_{p_k}^n(t)),\\
\tilde{\sigma}(X_t^n,t)&:= \kappa\beta(S(X_{p_k}^n(t))).
\end{align}
Furthermore, we set
\begin{align}
b(X_t^n,t):= \begin{bmatrix}
F_1(X_t^n,t)\\F_2(X_t^n,t)
\end{bmatrix} \textrm{ and } 
\sigma(X_t^n,t):=\begin{bmatrix}
\textbf{o}\\ \tilde{\beta}
\end{bmatrix},
\end{align}
with
\begin{align}
\textbf{o}:= \begin{bmatrix}
0 &0\\ 0 & 0
\end{bmatrix} \quad \text{ and } \tilde{\beta}:=\begin{bmatrix}
\tilde{\sigma}_{11} & \tilde{\sigma}_{12} \\ \tilde{\sigma}_{21} & \tilde{\sigma}_{22}
\end{bmatrix},
\end{align}
where $\tilde{\sigma}_{IJ} := (\tilde{\sigma}(X_t^n,t))_{IJ}$ for $1 \leq I, J \leq 2$
and the initial datum is 
\begin{align}
X^n(0):= X_0 := \begin{bmatrix}
X_{a_i,0}\\ X_{b_k,0}
\end{bmatrix}.
\end{align}
We denote by $\{\Phi_t^n\}$ the associated process of $\{X_t^n\}$ as in \eqref{main-skorohod}, viz.
\begin{align}
\Phi_t^n:= \begin{bmatrix}
\Phi_1^n(t))\\\Phi_2^n(t)
\end{bmatrix}, \quad t \in S.
\end{align}

We use the compactness method together with the continuity result of the deterministic case stated in Theorem \ref{deterministic-theo} for proving the existence of solutions to \eqref{main-skorohod}-\eqref{initial-cond}. We follow the arguments  by G. Da Prato and J. Zabczyk  ($2014$) (cf. \cite{Prato14}, Section $8.3$) and a result of F. Flandoli (1995) (cf. \cite{Flandoli95}) for martingale solutions. The starting point of this argument is based on considering a sequence $\{X_t^n\}$ of solutions of the following system of Skorohod-like stochastic differential equations
\begin{align}\label{main-skorohod2}
\begin{cases}
dX_t^n = b(X_t^n(h^n(t))dt + \sigma(X_t^n(h^n(t)))dB(t) + d\Phi_t^n,\\
X^n(0) = X_0 \in \bar{D},
\end{cases}
\end{align}
where $X_0^n \in \bar{D}$ is given, and 
\begin{align}\label{step-func1}
h^n(0)=0,
\end{align}
\begin{align}\label{step-func2}
h^n(t)= (k-1)2^{-n}, \quad (k-1)2^{-n} <t\leq  k 2^{-n}, \quad k=1,2,\ldots,n \text{ and } n\geq 1.
\end{align}
Moreover, by Theorem \ref{deterministic-theo}, we have a unique solution of \eqref{main-skorohod2}. Hence, $X_t^n$ obtained for $0 \leq t \leq k2^{-n}$ and for $k2^{-n} < t \leq (k+1)2^{-n}$ is uniquely determined as solution of the following Skohorod equation: 
\begin{align}
X_t^n = X_t^n(k2^{-n}) + b(X_t^n(k2^{-n}))(t-k2^{-n}) + \sigma(X_t^n(k2^{-n}))\{B(t) - B(k2^{-n})\} + \Phi_t^n.
\end{align}
Let us call
\begin{align}
Y_t^n := X_0 + \int_{0}^tb(X_y^n(h^n(y))dy + \int_0^t \sigma(X_y^n(h^n(y))dB(y). 
\end{align}
Then $X_t^n(t) = (\Gamma Y_t^n)(t)$, we also have
\begin{align}\label{subsystem}
Y_t^n := X_0 + \int_{0}^tb((\Gamma Y_t^n)(h^n(y))dy + \int_0^t \sigma((\Gamma Y_t^n)(h^n(y))dB(y). 
\end{align}
We define the family of laws
\begin{eqnarray}\label{laws}
\left\{\mathcal{P}(Y_t^n); 0\leq t \leq T, n\geq 1\right\}.
\end{eqnarray}
\eqref{laws} is a family of probability distributions of $Y_t^n$. Let $\mathcal{P}^n$ be the laws of $Y_t^n$.

The compactness argument proceeds as follows. We begin with $Y_t^n, n \in \mathbb{N}$, given  cf. \eqref{subsystem}. The construction of $Y_t^n$ is investigated on a probability space $(\Omega,\mathcal{F}, P)$ with a filtration $\{\mathcal{F}_t\}$ and a Brownian motion $B(t)$. Next, let $\mathcal{P}^n$ be the laws of $Y_t^n$ which is defined cf. \eqref{laws}. Then, by using Prokhorov's Theorem (cf. \cite{Billingsley1999}, Theorem $5.1$), we can show that the sequence of laws $\{\mathcal{P}^n(Y_t^n)\}$ is weakly convergent as $n \to \infty$ to $\mathcal{P}(Y_t)$ in $C(\bar{S};\mathbb{R}^2 \times \mathbb{R}^2)$. Then, by using the \textquotedblleft Skorohod Representation Theorem\textquotedblright (cf. \cite{Prato14}, Theorem $2.4$), this weak convergence holds in a new probability space with a new stochastic process, for a new filtration. This leads to some arguments for weak convergence results of two stochastic processes in two different probability spaces together with the continuity result in Theorem \ref{deterministic-theo} that we need to use to obtain the existence of our Skorohod-like system \eqref{main-skorohod}. Finally, we prove the uniqueness of solutions to our system. 

\subsection{Proof of Theorem \ref{main-theo1}}

Let us start with handling the tightness of the laws $\{\mathcal{P}^n\}$ through the following Lemma.
\begin{lemma}\label{tightness}
	Assume that $(\text{A}_1)$-$(\text{A}_5)$ hold. Then, the family $\{\mathcal{P}^n\}$ given by \eqref{laws} is tight in $C(\bar{S}, \mathbb{R}^2\times \mathbb{R}^2 )$.
\end{lemma}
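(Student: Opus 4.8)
The plan is to verify tightness through the compactness criterion already assembled in the preliminaries: the set $K'_{M_1M_2}$ of \eqref{relativelycompactKMP_2} is relatively compact in $C(\bar S;\mathbb{R}^2\times\mathbb{R}^2)$ as soon as $\alpha p>1$. It therefore suffices to produce bounds, \emph{uniform in $n$}, on the supremum norm and on the fractional Sobolev seminorm $[\,\cdot\,]_{W^{\alpha,p}}$ of $Y^n$, and then to convert these into uniform tail estimates via Markov's inequality. I would first fix exponents $p>2$ and $\alpha\in(1/p,1/2)$, so that simultaneously $\alpha p>1$ (needed for the embedding $W^{\alpha,p}\hookrightarrow C^\gamma$, hence for relative compactness) and $\alpha<1/2$ (needed for the integral below to converge).

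The workhorse is a Kolmogorov-type increment estimate. For $0\le s<t\le T$ I would write $Y_t^n-Y_s^n=\int_s^t b(\cdots)\,dy+\int_s^t \sigma(\cdots)\,dB(y)$, bound the drift part by $L|t-s|$ using $|b|\le L$ from $(\text{A}_1)$, and control the stochastic integral by the Burkholder--Davis--Gundy inequality together with $|\sigma|\le L$, giving $E\big[\big|\int_s^t\sigma\,dB\big|^p\big]\le C_p L^p|t-s|^{p/2}$. Combining the two yields $E[|Y_t^n-Y_s^n|^p]\le C(L,p,T)\,|t-s|^{p/2}$ for all $n$. The decisive observation is that the time-delay $h^n$ and the reflection map $\Gamma$ enter only through the arguments of $b$ and $\sigma$; since $\Gamma$ is $\bar D$-valued and the coefficients are globally bounded by $(\text{A}_1)$, the constant does not depend on $n$.

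Next I would integrate this increment bound against the singular kernel of the Sobolev seminorm,
\[
E\big[[Y^n]_{W^{\alpha,p}}^p\big]=\int_0^T\!\!\int_0^T\frac{E[|Y_t^n-Y_s^n|^p]}{|t-s|^{1+\alpha p}}\,dt\,ds\le C\int_0^T\!\!\int_0^T|t-s|^{\,p/2-1-\alpha p}\,dt\,ds,
\]
where the choice $\alpha<1/2$ forces the exponent $p/2-1-\alpha p>-1$, so the double integral is finite and $\sup_n E\big[[Y^n]_{W^{\alpha,p}}^p\big]=:M_2^p<\infty$. In parallel, boundedness of $X_0$ on $\bar D$, the bound $|b|\le L$, and a second application of Burkholder--Davis--Gundy to $\sup_{t\le T}\big|\int_0^t\sigma\,dB\big|$ give $\sup_n E\big[\|Y^n\|_{L^\infty}^p\big]=:M_1^p<\infty$.

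Finally, given $\varepsilon>0$, Markov's inequality lets me choose $M_1,M_2$ large enough that $P(\|Y^n\|_{L^\infty}>M_1)<\varepsilon/2$ and $P([Y^n]_{W^{\alpha,p}}>M_2)<\varepsilon/2$ for every $n$; hence $\mathcal{P}^n(\overline{K'_{M_1M_2}})\ge P(Y^n\in K'_{M_1M_2})\ge 1-\varepsilon$, and since $\overline{K'_{M_1M_2}}$ is compact, this is precisely tightness. I expect the only genuine obstacle to be the bookkeeping of the admissible window $1/p<\alpha<1/2$ (which forces $p>2$), so that relative compactness and integrability of the singular kernel hold together; the uniformity in $n$ comes for free once one notes that $b$ and $\sigma$ are bounded irrespective of their arguments, so neither the discretization $h^n$ nor the reflection map $\Gamma$ can spoil the constants.
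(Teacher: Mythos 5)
Your proposal is correct and follows essentially the same route as the paper: uniform-in-$n$ bounds on $\|Y^n\|_{L^\infty}$ and on $E[|Y^n_t-Y^n_s|^p]\le C|t-s|^{p/2}$ via the boundedness of $b,\sigma$ from $(\text{A}_1)$ and the Burkholder--Davis--Gundy inequality, followed by integration against the $W^{\alpha,p}$ kernel with $p>2$, $\alpha\in(1/p,1/2)$, and Markov's inequality to land in the relatively compact set $K'_{M_1M_2}$. Your explicit remark that $h^n$ and $\Gamma$ only enter through the arguments of the globally bounded coefficients, so the constants are uniform in $n$, is the same (implicit) observation the paper relies on.
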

\begin{proof}
	To prove the wanted tightness, let us recall the following compact set in $C(\bar{S}, \mathbb{R}^2 \times \mathbb{R}^2)$
	\begin{align}
	K_{M_1M_2} = \left\{f \in C(\bar{S}; \mathbb{R}^2 \times \mathbb{R}^2): \|f\|_{L^\infty(S; \mathbb{R}^2 \times \mathbb{R}^2)} \leq M_1, [f]_{C^\alpha(\bar{S};\mathbb{R}^2 \times \mathbb{R}^2)} \leq M_2\right\}.
	\end{align}
	Now, we show that for a given $\varepsilon > 0$, there are $M_1, M_2 > 0$ such that
	\begin{align}
	P(Y_{\cdot}^n \in K_{M_1M_2}) \leq \varepsilon, \text{ for all } n \in \mathbb{N}.
	\end{align}
	This means that
	\begin{align}
	P(\|Y_{t}^n\|_{L^\infty(S;\mathbb{R}^2 \times \mathbb{R}^2)} > M_1 \text{ or } [Y_{\cdot}^n]_{C^\alpha(\bar{S};\mathbb{R}^2 \times \mathbb{R}^2)} > M_2) \leq \varepsilon.
	\end{align}
	A sufficient condition for this to happen is
	\begin{align}
	P(\|Y_t^n\|_{L^\infty(S; \mathbb{R}^2\times \mathbb{R}^2)} > M_1) < \frac{\varepsilon}{2} \text{ and } P([Y_\cdot^n]_{C^\alpha(\bar{S}; \mathbb{R}^2\times \mathbb{R}^2)} > M_2) < \frac{\varepsilon}{2},
	\end{align}
	where $Y_{\cdot}$ denotes either $Y_t$ or $Y_r$.
	
	We consider first $	P(\|Y_\cdot^n\|_{L^\infty(S, \mathbb{R}^2\times \mathbb{R}^2)} > M_1) < \frac{\varepsilon}{2} $. Using Markov's inequality (see e.g. \cite{Jacod2004}, Corollary 5.1), we get
	\begin{align}
	P(\|Y_t^n\|_{L^\infty(S; \mathbb{R}^2\times \mathbb{R}^2)} > M_1) \leq \frac{1}{M_1}E[\sup_{t\in S} |Y_t^n|],
	\end{align}
	but 
	\begin{align}
	\sup_{t \in S}|Y_t^n| &= \sup_{t \in S} \Bigg\{\left|X_{a_i,0} + \int_{0}^t F_1((\Gamma Y_t^n)(h^n(y)))dy\right|\nonumber\\&, \left|X_{p_k,0} + \int_{0}^t F_2((\Gamma Y_t^n)(h^n(y)))dy + \int_0^t\sigma((\Gamma Y_t^n)(h^n(y)))dB(y)\right|\Bigg\}.
	\end{align}
	We estimate 
	\begin{align}\label{sup1}
	\sup_{t \in S}|Y_t^n| &= \sup_{t \in S} \Bigg\{\left|X_{a_i,0}\right| + \left|\int_{0}^t F_1((\Gamma Y_t^n)(h^n(y)))dy\right|\nonumber\\&, \left|X_{p_k,0}\right| + \left|\int_{0}^t F_2((\Gamma Y_t^n)(h^n(y)))dy\right| + \left|\int_0^t\sigma((\Gamma Y_t^n)(h^n(y)))dB(y)\right|\Bigg\}.
	\end{align}
	Since $F_1, F_2$ are bounded, then we have 
	\begin{align}
	\int_{0}^T F_1((\Gamma Y_t^n)(h^n(y)))dy \leq C \text{ and } \int_{0}^T F_2((\Gamma Y_t^n)(h^n(y)))dy \leq C.
	\end{align}
	Taking the expectation on \eqref{sup1}, we are led to 
	\begin{align}
	E\left[ \sup_{t\in S} |Y_t^n|\right] \leq C + E\left[\sup_{t \in S}\left|\int_0^t \sigma((\Gamma Y_t^n)(h^n(y)))dB(y)\right|\right].
	\end{align}
	On the other hand, the Burlkholder-Davis-Gundy's inequality \footnote[1]{See e.g. \cite{Karatzars2000}, Theorem 3.28 (The Burlkholder-Davis-Gundy's inequality). Let $M \in \mathcal{M }^{c,\text{loc}}$ and call $M_t^{*} := \max_{0\leq s\leq t}|M_s|$. For every $m > 0$, there exists universal positive constants $k_m$, $K_m$ (depending only on $m$), such that the inequalities $$k_m E(\langle M\rangle_T^m \leq E[(M_T^{*})^{2m}] \leq K_mE(\langle M\rangle_T^m)$$ hold for every stopping time $T$. Note that $\mathcal{M }^{c,\text{loc}}$ denotes the space of continuous local martingales and $\langle X\rangle$ represents the quadratic variance process of $X \in \mathcal{M }^{c,\text{loc}}$.}  implies
	\begin{align}
	E\left[\sup_{t \in S}\left|\int_0^t \sigma((\Gamma Y_t^n)(h^n(y)))dB(y)\right|\right]  \leq E\left[\int_0^t |\sigma((\Gamma Y_t^n)(h^n(y)))|^2dy\right]^{1/2}. 
	\end{align}
	Then, we have the following estimate
	\begin{align}
	E\left[ \sup_{t\in S} |Y_t^n|\right]  \leq C+ E\left[\int_0^t |\sigma((\Gamma Y_t^n)(h^n(y)))|^2dy\right]^{1/2} \leq C
	\end{align}
	Hence, for $\varepsilon > 0$, we can choose $M_1>0$ such that $	P(\|Y_t^n\|_{L^\infty(S; \mathbb{R}^2\times \mathbb{R}^2)} > M_1) < \frac{\varepsilon}{2} $.
	
	In the sequel, we consider the second inequality $P([Y_\cdot^n]_{C^\alpha(\bar{S}; \mathbb{R}^2\times \mathbb{R}^2)} > M_2) < \frac{\varepsilon}{2}$, this reads
	\begin{align}
	P([Y_\cdot^n]_{C^\alpha(\bar{S};\mathbb{R}^2 \times \mathbb{R}^2)} > M_2) = P\left(\sup_{t\neq r; t,r \in S} \frac{|Y_t^n - Y_r^n|}{|t-r|^{\alpha}} > M_2\right) \leq \frac{\varepsilon}{2}.
	\end{align}
	
	Let us introduce another class of compact sets now in the Sobolev space $W^{\alpha, p}(0,T; \mathbb{R}^2\times \mathbb{R}^2)$ (which for suitable exponents $\alpha p - \gamma > 1$ lies in $C^\gamma(\bar{S}; \mathbb{R}^2\times \mathbb{R}^2)$). Additionally, we recall the relatively compact sets $K'_{M_1M_2}$, defined as in Section \ref{pre-assumptions}, such that
	\begin{align}
	K'_{M_1M_2} = \left\{ f \in C(\bar{S};\mathbb{R}^2 \times \mathbb{R}^2): \|f\|_{L^{\infty}(S; \mathbb{R}^2 \times \mathbb{R}^2)} \leq M_1, [f]_{W^{\alpha,p}(S; \mathbb{R}^2 \times \mathbb{R}^2)} \leq M_2\right\}.
	\end{align}
	A sufficient condition for $K'_{M_1M_2}$ to be a relative compact underlying space is $\alpha p>1$ (see e.g. \cite{Flandoli95}, \cite{Colangeli2019}). Having this in mind, we wish to prove that there exits $\alpha \in (0,1)$ and $p>1$ with $\alpha p>1$ together with the property: given $\varepsilon > 0$, there is $M_2>0$ such that 
	\begin{align}
	P([Y_{\cdot}^n]_{W^{\alpha, p}(S; \mathbb{R}^2 \times \mathbb{R}^2)} > M_2) < \frac{\varepsilon}{2} \text{ for every } n \in \mathbb{N}.
	\end{align}
	Using Markov's inequality, we obtain 
	\begin{align}
	P([Y_{\cdot}^n]_{W^{\alpha, p}(S; \mathbb{R}^2 \times \mathbb{R}^2)} > M_2) &\leq \frac{1}{M_2} E\left[\int_0^T \int_0^T \frac{|Y_t^n - Y_r^n|^p}{|t-r|^{1+\alpha p}} dt dr\right] \nonumber\\
	&= \frac{C}{M_2} \int_0^T \int_0^T \frac{E[|Y_t^n - Y_r^n|^p]}{|t-r|^{1+\alpha p}} dt dr.
	\end{align}
	For $t>r$, we have
	\begin{align}
	Y_t^n - Y_r^n = \begin{bmatrix}
	\int_r^t F_1((\Gamma Y_t^n)(h^n(y)))dy\\\int_r^t F_2((\Gamma Y_t^n)(h^n(y)))dy
	\end{bmatrix} +
	\begin{bmatrix}
	0\\\int_r^t \sigma (X_y^n(h^n(y))) dB(y)
	\end{bmatrix}.
	\end{align}
	Let us introduce some further notation. For a vector $u=(u_1,u_2)$, we set $|u|:= |u_1|+|u_2|$. At this moment, we consider the following expression 
	\begin{align}
	|Y_t^n - Y_r^n| &= \left| \int_{r}^t F_1((\Gamma Y_t^n)(h^n(y)))dy\right| \nonumber\\&+ \left| \int_{r}^t F_2((\Gamma Y_t^n)(h^n(y)))dy+\int_r^t \sigma((\Gamma Y_t^n)(h^n(y)))dB(y)\right|.
	\end{align}
	Taking the modulus up to the power $p>1$ together with applying Minkowski inequality, we have 
	\begin{align}\label{modulus2}
	|Y_t^n - Y_r^n|^p &= \Bigg(\left| \int_{r}^t F_1((\Gamma Y_t^n)(h^n(y)))dy\right| \nonumber \\&+ \left| \int_{r}^t F_2((\Gamma Y_t^n)(h^n(y)))dy + \int_r^t \sigma((\Gamma Y_t^n)(h^n(y)))dB(y)\right|\Bigg)^p\nonumber \\
	&\leq C\Bigg(\left| \int_{r}^t F_1((\Gamma Y_t^n)(h^n(y)))dy\right|^p \nonumber \\&+ \left| \int_{r}^t F_2((\Gamma Y_t^n)(h^n(y)))dy\right|^p+\left|\int_r^t \sigma((\Gamma Y_t^n)(h^n(y)))dB(y)\right|^p \Bigg) \nonumber\\
	&\leq  C\Bigg(\int_{r}^t |F_1((\Gamma Y_t^n)(h^n(y)))|^pdy +  \int_{r}^t |F_2((\Gamma Y_t^n)(h^n(y)))|^pdy\nonumber \\&+\left|\int_r^t \sigma((\Gamma Y_t^n)(h^n(y)))dB(y)\right|^p\Bigg).
	\end{align}
	Taking the expectation on \eqref{modulus2}, we obtain the following estimate
	\begin{align}\label{e1}
	E[|Y_t^n - Y_r^n|^p] \leq C(t-r)^p + CE\left[ \left|\int_r^t \sigma((\Gamma Y_t^n)(h^n(y)))dB(y)\right|^p\right].
	\end{align}
	Applying the Burkholder-Davis-Gundy's inequality to the second term of the right hand side of \eqref{e1}, we obtain
	\begin{align}
	E\left[ \left|\int_r^t \sigma((\Gamma Y_t^n)(h^n(y)))dB(y)\right|^p\right] \leq C E\left[ \left(\int_r^t dy \right)^{p/2}\right] \leq C(t-r)^{p/2}.
	\end{align}
	On the other hand, if $\alpha < \frac{1}{2}$, then
	\begin{align}
	\int_0^T \int_0^T \frac{1}{|t-r|^{1+(\alpha - \frac{1}{2}) p}} dt dr < \infty.
	\end{align}
	Consequently, we can pick $\alpha < \frac{1}{2}$. Taking now $p>2$ together with the constraint $\alpha p >1$, we can find $M_2 > 0$ such that 
	\begin{align}
	P([Y_t^n]_{W^{\alpha, p}(S;\mathbb{R}^2 \times \mathbb{R}^2)} > M_2)  < \frac{\varepsilon}{2}.
	\end{align}
	This argument completes the proof of the Lemma.
\end{proof}

From Lemma \ref{tightness}, we have obtained that the sequence $\{\mathcal{P}^n\}$ is tight in $C(\bar{S};\mathbb{R}^2 \times \mathbb{R}^2)$. Applying the Prokhorov's Theorem, there are subsequences $\{\mathcal{P}^{n_k}\}$ which converge weakly to some $\mathcal{P}(Y_t)$ as $n \to \infty$. For simplicity of the notation, we denote these subsequences by $\{\mathcal{P}^{n}\}$. This means that we have $\{\mathcal{P}^n\}$ converging weakly to some probability measure $\mathcal{P}$ on Borel sets in $C(\bar{S};\mathbb{R}^2 \times \mathbb{R}^2)$. 

Since we have that $\mathcal{P}^n(Y_t^n)$ converges weakly to $\mathcal{P}(Y_t)$ as $n \to \infty$, by using the \textquotedblleft Skorohod Representation Theorem\textquotedblright, there exists a probability space $(\widetilde{\Omega}, \tilde{\mathcal{F}},\tilde{P})$ with the filtration $\{\tilde{\mathcal{F}}_t\}$ and $\tilde{Y}_t^n$, $\tilde{Y}_t$ belonging to $C(\bar{S}; \mathbb{R}^2 \times \mathbb{R}^2)$ with $n\in \mathbb{N}$, such that $\mathcal{P}(\tilde{Y}) = \mathcal{P}(Y)$, $\mathcal{P}(\tilde{Y}_t^n) = \mathcal{P}(Y_t^n)$, and $\tilde{Y}_t^n \to \tilde{Y}_t$ as $n \to \infty$, $\tilde{P}-$a.s.
Moreover, let $(\tilde{X}_t^n, \tilde{\Phi}_t^n)$ and $(\tilde{X}_t, \tilde{\Phi}_t)$ be the solutions of the Skorohod equations
\begin{align}\label{sols}
\tilde{X}_t^n &= \tilde{Y}_t^n + \tilde{\Phi}_t^n, \nonumber\\
\tilde{X}_t &= \tilde{Y}_t + \tilde{\Phi}_t,
\end{align}
respectively.  Then the continuity result in Theorem \ref{deterministic-theo} implies that the sequence $(\tilde{X}_t^n, \tilde{\Phi}_t^n)$ converges to $(\tilde{X}_t, \tilde{\Phi}_t) \in C(\bar{S}; \bar{D}\times \bar{D}) \times C(\bar{S})$ uniformly in $t\in \bar{S}$, $\tilde{P}-$a.s as $n \rightarrow \infty$. Hence, we  still need to prove that $\tilde{Y}_t^n$ converges to $\tilde{Y}_t$ in some sense, where we denote
\begin{align}
\tilde{Y}_t^n := \tilde{X}_0 + \int_{0}^tb(\tilde{X}_y^n(h_n(y))dy + \int_0^t \sigma(\tilde{X}_y^n(h_n(y))d\tilde{B}(y).
\end{align}
and 
\begin{align}
\tilde{Y}_t := \tilde{X}_0 + \int_{0}^tb(\tilde{X}_y^n((y))dy + \int_0^t \sigma(\tilde{X}_y^n((y))d\tilde{B}(y).
\end{align}
To complete the proof of the existence of solutions to the problem \eqref{main-skorohod}-\eqref{initial-cond} in the sense of Definition \ref{def}, we consider the following Lemma.
\begin{lemma}\label{solu-theo}
	The pair $(\tilde{X}_t, \tilde{\Phi}_t) \in C(\bar{S}; \bar{D}\times \bar{D}) \times C(\bar{S})$ cf. \eqref{sols} is a solution of the Skorohod-like system
	\begin{align}
	\tilde{X}_t = \tilde{X}_0 + \int_{0}^tb(\tilde{X}_y(y))dy + \int_0^t \sigma(\tilde{X}_y(y))d\tilde{B}(y) + \tilde{\Phi}_t
	\end{align}
	with $\tilde{X}_0 \in \bar{D}$.
\end{lemma}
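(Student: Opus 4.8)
The plan is to pass to the limit $n\to\infty$ in the representation
\begin{align}
\tilde{Y}_t^n = \tilde{X}_0 + \int_0^t b(\tilde{X}_y^n(h_n(y)))\,dy + \int_0^t \sigma(\tilde{X}_y^n(h_n(y)))\,d\tilde{B}(y)\nonumber
\end{align}
and to identify the limit with $\tilde{X}_0+\int_0^t b(\tilde{X}_y(y))\,dy+\int_0^t\sigma(\tilde{X}_y(y))\,d\tilde{B}(y)$; once this is achieved, adding $\tilde{\Phi}_t$ and using $\tilde{X}_t=\tilde{Y}_t+\tilde{\Phi}_t$ from \eqref{sols} produces the claimed equation. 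Two facts are already in hand and will drive every estimate: the $\tilde P$-a.s. uniform convergence $(\tilde{X}_t^n,\tilde{\Phi}_t^n)\to(\tilde{X}_t,\tilde{\Phi}_t)$ supplied by the continuity of the Skorohod map in Theorem \ref{deterministic-theo}, and the bound $|h_n(y)-y|\le 2^{-n}$ read off from \eqref{step-func2}, so that $h_n(y)\to y$ uniformly in $y$.

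First I would treat the drift. Writing $|\tilde{X}_y^n(h_n(y))-\tilde{X}_y(y)|\le\|\tilde{X}^n-\tilde{X}\|_{L^\infty(S)}+|\tilde{X}(h_n(y))-\tilde{X}(y)|$ and using the uniform continuity of the limit path $\tilde{X}$ on the compact interval $\bar{S}$ together with $h_n(y)\to y$, one gets $\tilde{X}_y^n(h_n(y))\to\tilde{X}_y(y)$ $\tilde P$-a.s. for a.e. $y$. Since $b$ is continuous by $(\text{A}_3)$ and bounded by $L$ thanks to $(\text{A}_1)$, the integrand is dominated and converges pointwise, so dominated convergence yields $\int_0^t b(\tilde{X}_y^n(h_n(y)))\,dy\to\int_0^t b(\tilde{X}_y(y))\,dy$ $\tilde P$-a.s.

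The stochastic integral is the step I expect to be the main obstacle, and here I would work in $L^2(\tilde P)$ rather than pathwise. Using the It\^o isometry,
\begin{align}
\tilde{E}\!\left[\left|\int_0^t\!\big(\sigma(\tilde{X}_y^n(h_n(y)))-\sigma(\tilde{X}_y(y))\big)d\tilde{B}(y)\right|^2\right]=\tilde{E}\!\left[\int_0^t\!\big|\sigma(\tilde{X}_y^n(h_n(y)))-\sigma(\tilde{X}_y(y))\big|^2dy\right],\nonumber
\end{align}
and the right-hand side tends to $0$ by dominated convergence, invoking once more the pointwise convergence of the integrand (via $(\text{A}_3)$) and the uniform bound $|\sigma|\le L$ from $(\text{A}_1)$. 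This gives $L^2$-convergence of the stochastic integrals, hence a.s. convergence along a further subsequence, which is enough to identify the limit. The genuinely delicate point here is that the It\^o isometry and the very meaning of $\int_0^\cdot\sigma(\cdot)\,d\tilde{B}$ require $\tilde{B}$ to be an $\{\tilde{\mathcal F}_t\}$-Brownian motion and the integrand to be $\{\tilde{\mathcal F}_t\}$-adapted on the new space; this must be guaranteed at the stage of applying the Skorohod Representation Theorem, by transferring the joint law of $(\tilde{Y}^n,\tilde{B})$ from that of $(Y^n,B)$ so that the martingale structure carried by the original equation survives on $(\widetilde{\Omega},\tilde{\mathcal F},\tilde P)$.

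Collecting the three limits gives $\tilde{Y}_t=\tilde{X}_0+\int_0^t b(\tilde{X}_y(y))\,dy+\int_0^t\sigma(\tilde{X}_y(y))\,d\tilde{B}(y)$, whence $\tilde{X}_t=\tilde{Y}_t+\tilde{\Phi}_t$ solves the stated system. It then remains to check that $(\tilde{X}_t,\tilde{\Phi}_t)$ satisfies Definition \ref{def}: the inclusion $\tilde{X}_t\in\bar{D}$ and the reflection structure of $\tilde{\Phi}$, namely the bounded variation and the representation $\tilde{\Phi}_t=\int_0^t\textbf{n}(y)\,d|\tilde{\Phi}|_y$ with $\textbf{n}(y)\in\mathcal{N}_{\tilde{X}(y)}$ on $\{\tilde{X}(y)\in\partial D\}$, are inherited from the deterministic Skorohod problem via Theorem \ref{deterministic-theo}, since $(\tilde{X},\tilde{\Phi})$ is by construction its solution for the input $\tilde{Y}$; adaptedness and continuity of $\tilde{X}$ and $\tilde{\Phi}$ pass to the limit from the approximating pairs by the a.s. uniform convergence.
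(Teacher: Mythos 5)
Your proposal is correct and follows the same overall strategy as the paper: pass to the limit term by term in the pre-limit equation $\tilde{Y}_t^n = \tilde{X}_0 + \int_0^t b(\tilde{X}_y^n(h_n(y)))\,dy + \int_0^t \sigma(\tilde{X}_y^n(h_n(y)))\,d\tilde{B}(y)$, using the $\tilde P$-a.s.\ uniform convergence of $(\tilde{X}^n,\tilde{\Phi}^n)$ supplied by Theorem \ref{deterministic-theo}, and then add $\tilde{\Phi}_t$. Where you diverge is in the treatment of the stochastic integral: the paper approximates $\int_0^t\sigma(\tilde{X}_y^n(h_n(y)))\,d\tilde{B}(y)$ by Riemann--Stieltjes sums and asserts convergence of those sums directly, whereas you use the It\^o isometry together with dominated convergence to obtain $L^2(\tilde P)$-convergence and then pass to an a.s.\ convergent subsequence. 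Your route is the more robust one: the paper's displayed limit of the Riemann sums is formal (it retains a finite sum over $k$ after letting $n\to\infty$ and silently replaces $t_k^n$ by $t_k$), while the isometry argument needs only the pointwise convergence of the integrand and the uniform bound $|\sigma|\le L$ from $(\text{A}_1)$. You also make explicit two points the paper leaves implicit: that $\tilde{B}$ must be constructed on the new probability space by transferring the joint law of $(Y^n,B)$ so that adaptedness and the martingale property survive the Skorohod representation step, and that the reflection structure of $(\tilde{X},\tilde{\Phi})$ required by Definition \ref{def} is inherited from the deterministic Skorohod problem. Both additions are needed for a complete argument, so your version fills genuine gaps in the paper's exposition rather than introducing new ones.
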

\begin{proof}
	We consider the term $\int_0^t \sigma(\tilde{X}_t^n(h_n(y))d\tilde{B}(y)$ with the step process $\sigma(\tilde{X}_t^n(h_n(y))$. Approximating this stochastic integral by Riemann-Stieltjes sums (see e.g. \cite{Evans2013}), it yields 
	\begin{align}\label{reimann-app}
	\int_{0}^t\sigma(\tilde{X}_y^n(h_n(y))d\tilde{B}(y) = \sum_{k=0}^{n-1} \sigma(\tilde{X}_t^n(h_n(t)))(B(t_{k+1}^n) - B(t_k^n)).
	\end{align}
	This gives by taking the limit $n \to \infty$ in \eqref{reimann-app}
	\begin{align}\label{reimann}
	\lim_{n\to \infty}\int_{0}^t\sigma(\tilde{X}_y^n(h_n(y))d\tilde{B}(y) = \lim_{n\to \infty}\sum_{k=0}^{n-1} \sigma(\tilde{X}_t^n(h_n(t)))(B(t_{k+1}^n) - B(t_k^n)) \nonumber\\=\sum_{k=0}^{n-1} \sigma(\tilde{X}_t(t))(B(t_{k+1}) - B(t_k))  = \int_0^t \sigma(\tilde{X}_y(y))d\tilde{B}(y).
	\end{align}
	By the fact that $(\tilde{X}_t^n, \tilde{\Phi}_t^n)$ converges to $(\tilde{X}_t, \tilde{\Phi}_t)\in C(\bar{S}; \bar{D} \times \bar{D})\times C(\bar{S})$ uniformly in $t\in [0,T]$ $\tilde{P}-$a.s as $n \rightarrow \infty$ together with \eqref{reimann}, we obtain that
	\begin{align}
	\tilde{X}_t^n = \tilde{X}_0 + \int_{0}^tb(\tilde{X}_y^n(h^n(y)))dy + \int_0^t \sigma(\tilde{X}_y^n(h^n(y)))d\tilde{B}(y) + \tilde{\Phi}_t^n.
	\end{align}
	converges to 
	\begin{align}
	\tilde{X}_t = \tilde{X}_0 + \int_{0}^tb(\tilde{X}_y(y))dy + \int_0^t \sigma(\tilde{X}_y(y))d\tilde{B}(y) + \tilde{\Phi}_t, \quad \tilde{P}-\text{a.s as } n\to \infty.
	\end{align}
\end{proof}	

\subsubsection{Proof of Theorem \ref{main-theo2}}

\begin{proof}
	We take $X_t, X'_t \in C(\bar{S}; \bar{D} \times \bar{D})$ two solutions to \eqref{main-skorohod}-\eqref{initial-cond} with the same initial values $X(0) = X'(0)$. 
	
	Moreover, suppose that the supports of $b$ and $\sigma$ are included in the same ball $B(x_0,\delta)$ for some $x_0 \in \partial D$. We use the proof idea of Lemma $5.3$ in \cite{Saisho1987}. Let us recall the assumption $(\text{A}_8)$, where $D$ satisfies the following condition: It exists a positive number $\nu$ such that for each $x_0 \in \partial D$ we can find $f\in C^2(\mathbb{R}^2 \times \mathbb{R}^2 )$ satisfying $$\langle y-x, \textbf{n}\rangle + \frac{1}{\nu}\langle\nabla f(x), \textbf{n}\rangle|y-x|^2 \geq 0.$$
	for any $x \in B(x_0,\delta') \cap \partial D, y \in B(x_0, \delta'') \cap \partial \bar{D}$ and $\textbf{n} \in \mathcal{N}_{x}$. Then, we have 
	\begin{align}\label{cond-1}
	\langle X_s - X_s', d\Phi_s - d\Phi'_s\rangle - \frac{1}{\nu}|X_s - X'_s|^2\langle\textbf{l}, d\Phi_s - d\Phi'_s\rangle \nonumber\\
	=-(\langle X_s - X_s', d\Phi_s\rangle +  \frac{1}{\nu}|X_s - X'_s|^2\langle\textbf{l}, d\Phi_s\rangle) \nonumber\\- (\langle X_s - X_s', d\Phi'_s\rangle +  \frac{1}{\nu}|X_s - X'_s|^2\langle\textbf{l}, d\Phi'_s\rangle) \leq 0,
	\end{align}
	where $\textbf{l}$ is the unit vector appearing in Condition $(\text{A}_7)$.
	
	Using similar ideas as in \cite{Lions1984}, Proposition $4.1$, we have the following estimate
	\begin{align}\label{est-uni1}
	&|X_t - X'_t|^2\exp\left\{-\frac{1}{\nu}(\Phi(X_t) - \Phi'(X_t))\right\} \leq \nonumber\\&2\Bigg(\exp\left\{-\frac{1}{\nu}(\Phi(X_y) - \Phi'(X_y))\right\}\int_0^t(b(X_y(y)) - b(X'_y(y)))dy \nonumber\\&+ \exp\left\{-\frac{1}{\nu}(\Phi(X_y) - \Phi'(X_y))\right\}\int_0^t(\sigma(X_y(y)) - \sigma(X_y(y))) dB(y) \Bigg)^2 \nonumber\\
	&+ \exp\left\{-\frac{1}{\nu}(\Phi(X_y) - \Phi'(X_y))\right\}\int_0^t\left(2\langle X_y - X'_y,l\rangle - \frac{1}{\nu}|X_y - X'_y|^2\right)d\Phi_y\nonumber\\&+\exp\left\{-\frac{1}{\nu}(\Phi(X_y) - \Phi'(X_y))\right\}\int_0^t\left(2\langle X_y - X'_y,l\rangle - \frac{1}{\nu}|X_y - X'_y|^2\right)d\Phi'_y \nonumber\\&2\int_0^t\left|b(X_y(y)) - b(X'_y(y))\right|^2\exp\left\{-\frac{2}{\nu}(\Phi(X_y) - \Phi'(X_y))\right\}dy \nonumber\\&+ 2\int_0^t\left|\sigma(X_y(y)) - \sigma(X_y(y))\right|^2\exp\left\{-\frac{2}{\nu}(\Phi(X_y) - \Phi'(X_y))\right\} dy \nonumber\\
	&+ \int_0^t\left(2\langle X_y - X'_y,l\rangle - \frac{1}{\nu}|X_y - X'_y|^2\right)\exp\left\{-\frac{1}{\nu}(\Phi(X_y) - \Phi'(X_y))\right\}d\Phi_y\nonumber\\&+\int_0^t\left(2\langle X_y - X'_y,l\rangle - \frac{1}{\nu}|X_y - X'_y|^2\right)\exp\left\{-\frac{1}{\nu}(\Phi(X_y) - \Phi'(X_y))\right\}d\Phi'_y.
	\end{align}
	On the other hand, taking the expectation are both sides of \eqref{est-uni1} and using the Lipschitz condidion to the first term of the right hand side together with \eqref{cond-1},  we are led to
	
	\begin{align}
	E\left(|X_t - X'_t|^2\exp\left\{-\frac{1}{\nu}(\Phi(X_t) - \Phi'(X_t))\right\}\right) &\leq \nonumber\\ C\int_0^tE\Big(|X_y &- X'_y|^2\exp\left\{-\frac{2}{\nu}(\Phi(X_y) - \Phi'(X_y))\right\}\Big)dy.
	\end{align}
	This also implies that
	
	\begin{align}
	E[|X_t - X_t'|^2] \leq C\int_0^tE[|X_y - X'_y|^2]dy.
	\end{align}
	Hence, $X_t=X_t'$ for all $t \in [0,T]$. Then, the pathwise uniqueness of solutions to \eqref{main-skorohod} holds true. On the other hand, combining the Lemma \ref{solu-theo} together with the fact that the pathwise uniqueness implies the uniqueness of strong solutions (see in \cite{Ikeda1981}, Theorem IV-1.1). Therefore, there is a unique solution \\$(X_t, \Phi_t) \in C(\bar{S}; \bar{D}\times \bar{D})\times C(\bar{S})$ of \eqref{main-skorohod}.
\end{proof}

%\begin{remark}\label{remark-stability} 
%	What concerns the stability with respect to data and parameters of our concept of solution to the Skorohod-like system, we can show the stability with respect to the initial data via standard arguments (see e.g. \cite{Evans2013}). Then, our problem to be well-posed in Hadamard's sense. 
%\end{remark}

\subsection{Proof of Theorem \ref{main-stabi}}

\begin{proof}
	Let us recall our system of SDEs
	
	\begin{align}
	\begin{cases}
	dX_t^n = b(X_t^n(t)) dt + \sigma (X_t^n(t))dB(t) + d\Phi_t^n,\\
	X^n(0) = X_0^n \in \bar{D} \text{ for } n \geq 1.
	\end{cases}
	\end{align}
	Then, we have
	
	%\begin{align}\label{sta-1}
	%X_t^n = X_0^n + \int_0^t b(X_y^n(y))dy + \int_0^t  \sigma (X_y^n(y))dB(y) + \Phi_t^n,
	%\end{align}
	%In other words, \eqref{sta-1} can be written as 
	
	\begin{align}\label{sta-1}
	X_t^n = X_0^n + \int_0^t b(X_z^n(z))dz + \int_0^t  \sigma (X_z^n(z))dB(z) + \Phi_t^n,
	\end{align}
	Let us consider the following equation
	
	\begin{align}
	X_t^n - X_t &= X_0^n - X_0 + \int_0^t b(X_z^n(z))dz - \int_0^t b(X_z(z))dz \nonumber\\&+ \int_0^t \sigma (X_z^n(z))dB(z) - \int_0^t \sigma (X_z(z))dB(z) + \Phi_t^n - \Phi_t.
	\end{align}
	Since
	$(a + b+ c + d)^2 \leq 4a^2 + 4b^2+ 4c^2+ 4d^2$ for any $a, b, c, d \in \mathbb{R}$, we have the following estimate
	\begin{align}\label{sta-2}
	|X_t^n - X_t|^2 &\leq 4|X_0^n - X_0|^2 + 4\left|\int_0^t b(X_z^n(z))dz - \int_0^t b(X_z(z))dz\right|^2 \nonumber\\&+ 4\left|\int_0^t \sigma (X_z^n(z))dB(z) - \int_0^t \sigma (X_z(z))dB(z)\right|^2 + 4|\Phi_t^n - \Phi_t|^2.
	\end{align} 
	Taking the expectation on both sides of  \eqref{sta-2}, we have
	\begin{align}\label{stabi-1}
	E(|X_t^n - X_t|^2) &\leq 4E(|X_0^n - X_0|^2) + 4E\left(\left|\int_0^t b(X_z^n(z))dz - \int_0^t b(X_z(z))dz\right|^2\right) \nonumber\\&+ 4E\left(\left|\int_0^t \sigma (X_z^n(z))dB(z) - \int_0^t \sigma (X_z(z))dB(z)\right|^2\right) + 4E\left(|\Phi_t^n - \Phi_t|^2\right).
	\end{align} 
	To begin with, we consider the second term and the third term of the right-hand side of \eqref{stabi-1}. Using Cauchy-Schwarz inequality together with the assumption that $b, \sigma$ are Lipschitz functions, we are led to
	
	\begin{align}\label{sta-1-1}
	E\left(\left|\int_0^t b(X_z^n(z))dz - \int_0^t b(X_z(z))dz\right|^2\right) &\leq CE\left(\int_0^t |b(X_z^n(z)) - b(X_z(z))|^2dz\right)\nonumber\\
	&\leq C\int_0^tE(|X_z^n - X_z|^2)dz
	\end{align}
	and 
	
	\begin{align}\label{sta-1-2}
	E\left(\left|\int_0^t \sigma (X_z^n(z))dB(z) - \int_0^t \sigma (X_z(z))dB(z)\right|^2\right) = E\left(\int_0^t |\sigma (X_z^n(z)) - \sigma (X_z(z))|^2dz\right)\nonumber\\
	\leq C\int_0^t E(|X_z^n - X_z|^2)dz.
	\end{align}
	Moreover, using \eqref{reflection-func}, it yields
	\begin{align}
	|\Phi_t^n - \Phi_t| &\leq 2|X_0^n - X_0| + 2\left|\int_0^t b(X_z^n(z))dz - \int_0^t b(X_z(z))dz\right| \nonumber\\&+ 2\left|\int_0^t \sigma (X_z^n(z))dB(z) - \int_0^t \sigma (X_z(z))dB(z)\right|.
	\end{align}
	Since $(a+b+c)^2 \leq 3a^2+3b^2+3c^2$ for all $a, b, c \in \mathbb{R}$, then we have
	\begin{align}\label{sta-3}
	|\Phi_t^n - \Phi_t|^2 &\leq 6|X_0^n - X_0|^2 + 6\left|\int_0^t b(X_z^n(z)) - \int_0^t b(X_z(z))dz\right|^2 \nonumber\\&+ 6\left|\int_0^t \sigma (X_z^n(z))dB(z) - \int_0^t \sigma (X_z(z))dB(z)\right|^2.
	\end{align}
	Taking the expectation on both sides of \eqref{sta-3}, we are led to
	\begin{align}\label{stabi-es1}
	E(|\Phi_t^n - \Phi_t|^2) &\leq 6E(|X_0^n - X_0|^2) + 6E\left(\left|\int_0^t b(X_z^n(z)) - \int_0^t b(X_z(z))dz\right|^2\right) \nonumber\\&+ 6E\left(\left|\int_0^t \sigma (X_z^n(z))dB(z) - \int_0^t \sigma (X_z(z))dB(z)\right|^2\right).
	\end{align}
	Apply Cauchy-Schwarz's inequality to the second and third terms of the right-hand side of \eqref{stabi-es1}, we have the following estimate
	\begin{align}
	E(|\Phi_t^n - \Phi_t|^2) &\leq 6E(|X_0^n - X_0|^2) + 
	CE\left(\int_0^t |b(X_z^n(z))dz - b(X_z(z))|^2dz\right) 
	\nonumber\\&
	+ 6E\left(\int_0^t \left|\sigma(X_z^n(z))dB(z) - \sigma(X_z(z))\right|^2dz\right).
	\end{align}
	Using again the assumption that $b, \sigma$ are Lipschitz functions, we get the following estimate
	\begin{align}\label{sta-phi}
	E(|\Phi_t^n - \Phi_t|^2) \leq 6E(|X_0^n - X_0|^2) + C\int_0^t E(|X_z^n(z) - X_z(z)|^2)dz \nonumber\\+ C\int_0^t E(|X_z^n(z) - X_z(z)|^2)dz.
	\end{align}
	Using \eqref{sta-1-1}, \eqref{sta-1-2} and \eqref{sta-phi}, then the inequality \eqref{sta-1} becomes
	\begin{align}\label{bfgronwall}
	E(|X_t^n - X_t|^2) \leq CE(|X_0^n - X_0|^2) + C\int_0^t E(|X_z^n(z) - X_z(z)|^2)dz,
	\end{align}
	for $0 \leq t \leq T$.
	Applying Gronwall?s inequality to \eqref{bfgronwall}, it yields
	\begin{align}\label{gronwall}
	E(|X_t^n - X_t|^2) \leq CE(|X_0^n - X_0|^2).
	\end{align}
	Moreover, we have that
	
	\begin{align}\label{sta-4}
	\max_{0\leq t \leq T}|X_t^n - X_t|^2 &\leq 4|X_0^n - X_0|^2 + C\int_0^T |X_t^n(t) - X_t(t)|^2dt \nonumber\\&+ 4\max_{0 \leq t \leq T}\left|\int_0^T \sigma (X_t^n(t)) - \sigma (X_t(t))dB(t)\right|^2 + 4\max_{0 \leq t \leq T} |\Phi_t^n - \Phi_t|^2.
	\end{align}
	After taking the expectation on both sides of \eqref{sta-4}, we apply the martingale inequality to the third term on the right-hand side of the resulting inequality, which reads
	\begin{align}
	E\left(\max_{0\leq t \leq T}|X_t^n - X_t|^2\right) &\leq 4E(|X_0^n - X_0|^2) + C\int_0^T E(|X_t^n(t) - X_t(t)|^2)dt \nonumber\\&+ C\int_0^T E(|X_t^n(t) - X_t(t)|^2)dt + 4E\left(\max_{0 \leq t \leq T} |\Phi_t^n - \Phi_t|^2\right) \nonumber\\
	&\leq CE(|X_0^n - X_0|^2) + C\int_0^T E(|X_t^n(t) - X_t(t)|^2)dt. 
	\end{align}
	Finally, using \eqref{sta-phi} and \eqref{gronwall}, we obtain the desired
	\begin{align}
	E\left(\max_{0\leq t \leq T}|X_t^n - X_t|^2\right) \leq CE(|X_0^n - X_0|^2).
	\end{align}
	By the fact that 
	$\lim_{n\to \infty} E(|X_0^n - X_0|^2) = 0$,
	we obtain the following estimate
	\begin{align}
	\lim_{n \to \infty} E\left(\max_{0\leq t \leq T}|X_t^n - X_t|^2\right) = 0.
	\end{align}
\end{proof}
\section{Concluding remarks}\label{conclusion}

In this paper, we have shown the existence and uniqueness of solutions to a system of Skorohod-like stochastic differential equations modeling  the dynamics of a mixed population of active and passive pedestrians walking within a heterogenous environment in the presence of a stationary fire. Due to the discomfort pressure term as well as to the Morse potential preventing particles (pedestrians) to overlap, our model is nonlinearly coupled. The main feature of the model is that the dynamics of the crowd takes place in an heterogeneous domain. i.e. obstacles hinder the motion. Hence, to allow the SDEs to account for the presence of the obstacles, we formulate our crowd dynamics scenario as a Skorohod-like system with reflecting boundary condition posed in a bounded domain in $\mathbb{R}^2$. Then we use compactness methods to prove the existence of solutions.  The uniqueness of solutions follows by standard arguments. 

There are a number of open issues that are worth to be investigated for our system:

1. To obtain a better insight on how the solution of the SDEs behave and how close is this behaviour to what is expected from standard evacuation scenarios, a convergent numerical approximation of solutions to \eqref{main-skorohod}-\eqref{initial-cond} needs to be implemented. One possible route is to design an iterative weak approximation of the Skorohod system as it is done e.g. in \cite{Bossy2004}, \cite{Onskog2010}, and in the references cited therein. The main challenge is to get fast and accurate numerical approximation of solutions so that an efficient parameter identification strategy can be proposed.

2. We did assume that the fire is a stationary obstacle, i.e.  $\partial F$ is independent on $t$. But, even if not evolving, this fire-obstacle should in fact have a time dependent boundary. Using the working technique from \cite{Onskog2010}, we expect that it is possible to handle the case of a time-evolving fire, provided the shape of the fire $\partial F(t)$ is sufficiently regular, fixed in space,  and  {\em a priori} prescribed.

3. From a mathematical point of view, the situation becomes a lot more challenging when there is a feedback mechanism between the pedestrian dynamics and the environment (fire and geometry). Empirically, such pedestrians-environment feedback was pointed out in  \cite{Ronchi2019}.  An extension can be done in this context  using the smoke observable $s(x,t)$. As a further development of our model, we intend to incorporate the "transport" of smoke eventually  via a measure-valued equation (cf. e.g. \cite{Hille2016}), coupled with our SDEs for the pedestrian dynamics.  In this case, besides the well-posedness question, it is interesting to study the large-time behavior of the system of evolution equations. 
Instead of a measure-valued equation for the smoke dynamics, one could also use a stochastically perturbed diffusion-transport equation. In this case, the approach from \cite{Crisan2018} is potentially applicable, provided the coupling between the SDEs for the crowd dynamics and the SPDE for the smoke evolution is done in a well-posed manner. However, in both cases, it is not yet clear cut how to couple correctly the model equations. 

4. From the modeling point of view, it would be very useful to find out to which extent the motion of active particles can affect the motion of passive particles so that the overall evacuation time is reduced. Note that our crowd dynamics context  does not involve leaders, and besides the social pressure and the repelling from overlapping,  there are no other imposed interactions between pedestrians. In this spirit, we are close to the setting described in \cite{Cejkova2018}, where active and passive particles interplay together to find exists in a maze. Further links between maze-solving strategies and our crowd dynamics scenario would need to be identified to make progress in this direction.

5. The model validation is an open question in this context. Hence, a suitable
experiment design is needed to make any progress in this sense. For instance, the experiments
in \cite{Horiuchi1986} can serve as a typical example for the relevance of distinguishing between two groups of occupants: regular users of the building and those less familiar with it.

6. Further extensions of this modeling approach can be foreseen. One particularly interesting direction would be either to endow the particles with "opinions" and let them be open-minded or closed-minded, or to recast the overall setting into a more classical opinion dynamics framework, where two distinct partial opinions compete to reach coherent patterns and collective consensus; see e.g. \cite{Pent2007} for interactions between opinions of a majority vs. those of a minority, fight for social influence  \cite{Pent2007,Groeber2014}, building opinions when limited information is available \cite{Mavrodiev2013}.

\appendix

\section{Regularized Eikonal equation for motion planning}\label{Eikonal}

To describe how the active population moves within $D$, we use a motion planning in terms of the solution of the following regularized Eikonal equation:
%	Find $\phi_\varepsilon\in C(\overline{\Omega})\cap C^2(\Omega)$ satisfying
\begin{align}\label{viscous-eikonal}
\begin{cases}
-\varsigma\Delta \phi_{\varsigma} + |\nabla\phi_{\varsigma}|^2 = f^2 \quad &\text{ in } D,\\
\phi_{\varsigma} = 0 \quad &\text{ at } E,\\
\nabla\phi_{\varsigma}\cdot \textbf{n} = 0 \quad &\text{ at } \partial (\Lambda \setminus (G \cup E \cup F)),
\end{cases}
\end{align}
where $\varsigma >0$ given sufficiently small. In fact, $|\nabla \phi_{\varsigma}|$ plays the role of {\em a priori} known guidance (navigation information). Inspired very much from the implementation of video games, this is a strategy commonly used in most major crowd evacuation softwares, i.e. the map of the building to be evacuated is built-in. An alternative motion guidance strategy is suggested in \cite{Yang2014}.

We point out the existence and uniqueness of classical solutions to the problem \eqref{viscous-eikonal} in the following Lemma.
\begin{lemma}
	Assume that $f \in C^{\alpha}(D)$ with $0<\alpha <1$. Let $D \subset \mathbb{R}^2$ be a domain with $\partial D \cup \partial G \in C^{2,\alpha}$. Then the problem \eqref{viscous-eikonal} has a unique solution $\phi_{\varsigma} \in C(\overline{D})\cap C^2(D)$. 
\end{lemma}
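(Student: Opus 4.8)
The plan is to remove the quadratic gradient nonlinearity by the classical Cole--Hopf (logarithmic) substitution, turning \eqref{viscous-eikonal} into a \emph{linear} second-order elliptic problem that can be treated by standard variational and Schauder theory. Concretely, I would set $u := e^{-\phi_{\varsigma}/\varsigma}$, equivalently $\phi_{\varsigma} = -\varsigma\ln u$, which requires $u>0$ (verified in the last step). A direct computation gives $\nabla u = -\varsigma^{-1}u\nabla\phi_{\varsigma}$ and $\varsigma^2\Delta u = u\bigl(|\nabla\phi_{\varsigma}|^2 - \varsigma\Delta\phi_{\varsigma}\bigr)$, so that the PDE $-\varsigma\Delta\phi_{\varsigma}+|\nabla\phi_{\varsigma}|^2=f^2$ becomes the linear equation $\varsigma^2\Delta u - f^2 u = 0$ in $D$. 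The boundary conditions transform accordingly: $\phi_{\varsigma}=0$ on $E$ gives the Dirichlet datum $u=1$ on $E$, while $\nabla\phi_{\varsigma}\cdot\textbf{n}=0$ gives the homogeneous Neumann condition $\nabla u\cdot\textbf{n}=0$ on $\partial(\Lambda\setminus(G\cup E\cup F))$, since $\nabla u\cdot\textbf{n}=-\varsigma^{-1}u\,(\nabla\phi_{\varsigma}\cdot\textbf{n})$.

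Next I would solve this linear mixed Dirichlet--Neumann problem. Because $f\in C^{\alpha}(D)$ and $D$ is bounded, $f^2\in C^{\alpha}(D)$ with $[f^2]_{C^\alpha}\le 2\|f\|_{\infty}[f]_{C^\alpha}$, and the zeroth-order coefficient $f^2\ge 0$ carries the favorable sign. A weak solution is produced by Lax--Milgram on the affine space of $H^1(D)$ functions with trace $1$ on $E$: the bilinear form $a(u,v)=\int_D(\varsigma^2\nabla u\cdot\nabla v + f^2 uv)\,dx$ is continuous and coercive, coercivity following from the Poincar\'e inequality on the subspace of functions vanishing on $E$ (the Dirichlet portion $E$ being a nontrivial part of $\partial D$) together with $f^2\ge 0$. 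Interior Schauder estimates then upgrade the weak solution to $u\in C^{2,\alpha}_{\mathrm{loc}}(D)\subset C^2(D)$, using $f^2\in C^\alpha$ and $\partial D\cup\partial G\in C^{2,\alpha}$; boundary continuity $u\in C(\bar D)$ follows from barrier/Wiener-type arguments at the $C^{2,\alpha}$ portions of the boundary. Uniqueness of $u$ is immediate from the weak maximum principle for the operator $\Delta-\varsigma^{-2}f^2$, whose zeroth-order coefficient is nonpositive.

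The decisive step is to show that $u$ is bounded away from zero, so that $\phi_{\varsigma}=-\varsigma\ln u$ is well defined, bounded, continuous on $\bar D$, and $C^2$ in $D$. The constant $1$ is a supersolution ($\Delta 1-\varsigma^{-2}f^2\cdot 1=-\varsigma^{-2}f^2\le 0$) matching the data on $E$ and the Neumann condition, so comparison gives $u\le 1$; likewise $u\ge 0$. To rule out $\min_{\bar D}u=0$ I would invoke the strong maximum principle and Hopf's lemma: a nonconstant solution cannot attain a nonpositive interior minimum, nor a nonpositive minimum at a boundary point of the Neumann part (there Hopf would force $\nabla u\cdot\textbf{n}<0$, contradicting $\nabla u\cdot\textbf{n}=0$), and on $E$ one has $u=1$; the interior sphere condition needed for Hopf's lemma is guaranteed by $\partial D\in C^{2,\alpha}$. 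Hence $\min_{\bar D}u>0$. Transforming back, $\phi_{\varsigma}=-\varsigma\ln u$ inherits the regularity $C(\bar D)\cap C^2(D)$ and solves \eqref{viscous-eikonal}, and its uniqueness follows from that of $u$ through the bijection $\phi_{\varsigma}\leftrightarrow u$.

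I expect the main obstacle to be concentrated in the last step and at the boundary: securing the strictly positive lower bound on $u$ (which is exactly what makes the logarithm admissible and the resulting $\phi_\varsigma$ finite), and handling the mixed Dirichlet--Neumann structure near the interface where the exit $E$ meets the reflecting boundary, where corner effects could a priori spoil regularity. The fact that only $C(\bar D)\cap C^2(D)$ is claimed, rather than $C^{2,\alpha}(\bar D)$, is what keeps this manageable, since interior Schauder theory away from the interface already suffices for the $C^2(D)$ conclusion while continuity up to $\partial D$ can be obtained by barriers.
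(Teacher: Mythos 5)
Your proof follows essentially the same route as the paper: the paper performs the identical Cole--Hopf substitution (writing $w_a = e^{-\phi_\varsigma/\varsigma}-1$, i.e.\ your $u-1$) to reduce \eqref{viscous-eikonal} to a linear mixed Dirichlet--Neumann problem, and then simply cites Lieberman (1986) for its unique solvability in $C(\overline{D})\cap C^2(D)$. Your version is more self-contained --- Lax--Milgram plus interior Schauder plus barriers in place of the citation --- and in particular your strong-maximum-principle/Hopf argument for $\min_{\overline{D}}u>0$ supplies the positivity needed to invert the logarithm, a point the paper passes over in silence.
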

\begin{proof} The idea of this proof comes from Theorem 2.1, p.10, in \cite{Schieborn:2006} for the case of the Dirichlet problem. 
	In fact, the semilinear viscous problem \eqref{viscous-eikonal} can be transformed into a linear partial differential equation via $w_a: D \longrightarrow \mathbb{R}$ given by
	\begin{align}
	w_a(\phi_\varsigma) := \exp(-\varsigma^{-1}\phi_\varsigma) - 1,
	\end{align}
	where $a = \frac{1}{\varsigma}$.
	Then $w_a \in C(\overline{D})\cap C^2(D)$ becomes a solution of the following linear partial differential equation with mixed Dirichlet-Neumann boundary conditions:
	\begin{align}
	\begin{cases}\label{transform:eqn}
	-\Delta w_a + f^2 a^2 w_a + a^2 = 0 \quad &\text{ in } D,\\
	w_a = 0 \quad &\text{ at } E,\\
	\nabla w_a\cdot \textbf{n} = 0 \quad &\text{ at } \partial D \cup \partial G.
	\end{cases}
	\end{align}
	Futhermore, there is a unique solution $w_a \in C(\overline{D})\cap C^2(D)$ of the problem \eqref{transform:eqn} (see in Theorem 1, \cite{Lieberman1986}). This also implies that there is a unique solution $\phi_\varsigma\in C(\overline{D})\cap C^2(D)$ to the problem \eqref{viscous-eikonal}.
\end{proof}

\section{Nondimensionalization}\label{nondim}

In this section, we nondimensionalize the system \eqref{active}-\eqref{passive}. By this procedure, we aim to identify the relevant characteristic time and length scales involved in this crowd dynamics scenario. We let $\hat{D}$ denote the scaled set $\frac{1}{x_{\text{ref}}} D$. We introduce $x_{a_i}^{\text{ref}}, x_{p_k}^{\text{ref}}, x_{\text{ref}}$ and $t_{a_i}^{\text{ref}}, t_{p_k}^{\text{ref}}, t_{\text{ref}}$ as possible characteristic length and time scales, respectively. We choose

$X_{a_i}(t_\text{ref}\tau):=\frac{x_{a_i}(t)}{x_{a_i}^{\text{ref}}}, X_{p_k}(t_\text{ref}\tau):= \frac{x_{p_k}(t)}{x_{p_k}^{\text{ref}}}, z:=\frac{x}{x_{\text{ref}}}$ and $\tau:= \frac{t_{a_i}}{t_{a_i}^{\text{ref}}} = \frac{t_{p_k}}{t_{p_k}^{\text{ref}}} = \frac{t}{t_{\text{ref}}}$
where $x_{a_i}^{\text{ref}} = x_{p_k}^{\text{ref}}=x_{\text{ref}}$ and  $t_{a_i}^{\text{ref}}=t_{p_k}^{\text{ref}}= t_{\text{ref}}$. Then, equations \eqref{active} and \eqref{passive} become
\begin{align}\label{active-nond1}
\begin{cases}
\frac{x_{\text{ref}}}{t_{\text{ref}}}\frac{d}{d\tau}X_{a_i}(t_{\text{ref}}\tau) &= \Upsilon_{\text{ref}}s_{\text{ref}}\tilde{\Upsilon}(S(x_{\text{ref}}X_{a_i}(t_{\text{ref}}\tau)))\frac{\phi_{\text{ref}}\nabla_{X_{a_i}} \tilde{\phi}(x_{\text{ref}}X_{a_i}(t_{\text{ref}}\tau))}{\phi_{\text{ref}}|\nabla_{X_{a_i}} \tilde{\phi}(x_{\text{ref}}X_{a_i}(t_{\text{ref}}\tau))|}(p_{\max}\\&- p_{\text{ref}}\tilde{p}(x_{\text{ref}}X_{a_i}(t_{\text{ref}}\tau), t_{\text{ref}}\tau) ),\\
X_{a_i}(0) &= \frac{X_{a_{i}0}}{x_{\text{ref}}},
\end{cases}
\end{align}
where
\begin{align}
p(\textbf{x}_{a_i}(t), t) &=p_{\text{ref}}\tilde{p}(x_{\text{ref}}X_{a_i}(t_{\text{ref}}\tau), t_{\text{ref}}\tau) \nonumber\\&= \mu_{\text{ref}}\tilde{\mu}( t_{\text{ref}}\tau_{a_i})\int_{\hat{\Omega}\cap B(x_{\text{ref}}X_{a_i},\tilde{\delta}_{\text{ref}}\hat{\tilde{\delta}})} \sum_{j=1}^N\delta(y_{\text{ref}}Y - x_{\text{ref}}X_{c_j}(t_{\text{ref}}\tau)y_{\text{ref}}dY.
\end{align}
\begin{align}\label{passive-nond1}
\begin{cases}
\frac{x_{\text{ref}}}{t_{\text{ref}}}\frac{d}{d\tau}X_{p_k}(t_{\text{ref}}\tau) &= \sum_{j=1}^{N}\frac{x_{\text{ref}}X_{c_j} - x_{\text{ref}}X_{p_k} }{\epsilon + |x_{\text{ref}}X_{c_j} - x_{\text{ref}}X_{p_k}|}\omega_{\text{ref}}\tilde{\omega}(|x_{\text{ref}}X_{c_j} - x_{\text{ref}}X_{p_k}|, S(x_{\text{ref}}X_{p_k}(t_{\text{ref}}\tau))) \\&+ \beta_{\text{ref}}\tilde{\beta}(S(x_{\text{ref}}X_{p_k}( t_{\text{ref}}\tau)))\sqrt{d\tau}\frac{d\tilde{B}(t_{\text{ref}}\tau)}{\sqrt{d\tau}},\\
X_{p_k}(0) &= \frac{X_{p_k0}}{x_{\text{ref}}},
\end{cases}
\end{align}
where
\begin{align}
\omega(y,z) =\omega_{\text{ref}}\tilde{\omega} (y_{\text{ref}}\tilde{y}, z_{\text{ref}}\tilde{z}) = -\beta_{\text{ref}}\beta(z_{\text{ref}}\tilde{z}) \left(-C_A e^{-\frac{y_{\text{ref}}\tilde{y}}{\ell_A} }+ C_R e^{-\frac{y_{\text{ref}}\tilde{y}}{\ell_R}}\right),
\end{align}
\begin{align}
\beta(y)=\beta_{\text{ref}}\tilde{\beta}(y_{\text{ref}}M)=
\begin{cases}
1, \text{ if } y_{\text{ref}}M < s_{\text{cr}},\\
0, \text{ if } y_{\text{ref}}M \geq s_{\text{cr}}.
\end{cases}
\end{align}
Multiplying \eqref{active-nond1} by $\frac{t_{\text{ref}}}{x_{\text{ref}}}$, we are led to
\begin{align}\label{active-nond2}
\begin{cases}
\frac{d}{d\tau}X_{a_i}(t_{\text{ref}}\tau) &= \frac{\Upsilon_{\text{ref}}t_{\text{ref}}s_{\text{ref}} }{x_{\text{ref}}}\tilde{\Upsilon}(S(x_{\text{ref}}X_{a_i}(t_{\text{ref}}\tau)))\frac{\phi_{\text{ref}}\nabla_{X_{a_i}} \tilde{\phi}(x_{\text{ref}}X_{a_i}(t_{\text{ref}}\tau))}{\phi_{\text{ref}}|x_{\text{ref}}\nabla_{X_{a_i}} \tilde{\phi}(X_{a_i}(t_{\text{ref}}\tau))|}(p_{\max}\\&- p_{\text{ref}}\tilde{p}(x_{\text{ref}}X_{a_i}(t_{\text{ref}}\tau), t_{\text{ref}}\tau) ),\\
X_{a_i}(0) &= \frac{X_{a_{i}0}}{x_{\text{ref}}}.
\end{cases}
\end{align}
Similarly, we obtain
\begin{align}\label{passvive-nond2}
\begin{cases}
\frac{d}{d\tau}X_{p_k}(t_{\text{ref}}\tau) &= \frac{\omega_{\text{ref}} t_{\text{ref}}}{x_{\text{ref}}}\sum_{j=1}^{N}\frac{x_{\text{ref}}X_{c_j} - x_{\text{ref}}X_{p_k} }{\epsilon + |x_{\text{ref}}X_{c_j} - x_{\text{ref}}X_{p_k}|}\tilde{\omega}(|x_{\text{ref}}X_{c_j} - x_{\text{ref}}X_{p_k}|, S(x_{\text{ref}}X_{p_k}(t_{\text{ref}}\tau))) \\&+ \frac{\beta_{\text{ref}}t_{\text{ref}}}{x_{\text{ref}}}\tilde{\beta}(S(x_{\text{ref}}X_{p_k}(t_{\text{ref}}\tau)))\sqrt{d\tau}\frac{d\tilde{B}(t_{\text{ref}}\tau)}{\sqrt{d\tau}},\\
X_{p_k}(0) &= \frac{X_{p_k0}}{x_{\text{ref}}},
\end{cases}
\end{align}
From \eqref{active-nond2} and \eqref{passvive-nond2} the following dimensionless numbers arise:
\begin{align}
\frac{\Upsilon_{\text{ref}}t_{\text{ref}}s_{\text{ref}} p_{\max}}{x_{\text{ref}}}, \quad \frac{\Upsilon_{\text{ref}}t_{\text{ref}}s_{\text{ref}} p_{\text{ref}}}{x_{\text{ref}}},\quad \frac{\omega_{\text{ref}} t_{\text{ref}}}{x_{\text{ref}}}, \quad  \frac{\beta_{\text{ref}}t_{\text{ref}}}{x_{\text{ref}}}.
\end{align}
These dimensionless numbers indicate four different choices of the characteristic time scale $t_{\text{ref}}$. This is due to the complexity of our system: active and passive agents interplay within the domain geometry as well as the propagation of the smoke. The choice of the corresponding time scale can be the characteristic time capturing relation between the smoke extinction, the walking speed and the discomfort level to the overall population size or the local discomfort, the one for the drift from the smoke propagation, the one for the drift produced by the action of active and passive pedestrians and the one for the amplifying factor on the noise. Therefore, in order to cover the physical relevance of the whole system, we introduce the following rate
\begin{align}
\kappa := \max\left\{\frac{\Upsilon_{\text{ref}}t_{\text{ref}}s_{\text{ref}} p_{\max}}{x_{\text{ref}}}, \frac{\Upsilon_{\text{ref}}t_{\text{ref}}s_{\text{ref}} p_{\text{ref}}}{x_{\text{ref}}}, \frac{w_{\text{ref}} t_{\text{ref}}}{x_{\text{ref}}}, \frac{\beta_{\text{ref}}t_{\text{ref}}}{x_{\text{ref}}}\right\}.
\end{align}
On the other hand, a typical choice for the reference length scale is $x_{\text{ref}} = \ell$, where $\ell:= \text{diam}(D)$. 
Finally, we obtain the following nondimensionalized equations
\begin{align}\label{active-nond4}
\begin{cases}
\frac{d}{d\tau}X_{a_i}(\tau) &= \kappa\Upsilon(S(X_{a_i}(\tau)))\frac{\nabla_{X_{a_i}} \phi(X_{a_i}(\tau))}{\|\nabla_{X_{a_i}} \phi(X_{a_i}(\tau))\|}( p_{\max}- p(X_{a_i}(\tau), \tau) ),\\
X_{a_i}(0) &= X_{a_{i}0}, \ i\in\{1,\dots, N_A\}.
\end{cases}
\end{align}
\begin{align}\label{passive-nond4}
\begin{cases}
\frac{d}{d\tau}X_{p_k}(\tau) &= \kappa\sum_{j=1}^{N}\frac{X_{c_j} - X_{p_k} }{\epsilon + \|X_{c_j} - X_{p_k}\|}w(\|X_{c_j} - X_{p_k}\|, S(X_{p_k}(\tau)) + \kappa\beta(S(X_{p_k}(\tau)))dB(\tau),\\
X_{p_k}(0) &= X_{p_k0}, \ k\in\{1,\dots, N_P\}.
\end{cases}
\end{align}

\section*{Acknowledgment}
We thank O. M. Richardson (Karlstad), E.N.M. Cirillo (Rome) and M. Colangeli (L'Aquila) for very fruitful discussions on the topic of active-passive pedestrian dynamics through heterogeneous environments.

\vskip2mm

\bibliographystyle{siam}
\bibliography{mybibn}

%\par{\bf References.}\, Always use $\backslash$cite$\{biblabelname\}$ (eg. \cite{taubes1}) to cite
%          references which have been named in the bibliography via
%          $\backslash$bibitem$\{biblabelname\}$.
%
%
%          % Non-BibTeX users please use
%
%          \begin{thebibliography}{}
%
%          % and use \bibitem to create references.
%
%           \bibitem{mnemonic}Author, {\em Article's title}, Journal Name,
%         Volume Number(Issue Number):page numbers, year.
%
%          % Format for Journal Reference. For example
%
%          \bibitem{taubes1} C. Taubes, {\em The Seiberg-Witten invariants
%          and
%          symplectic forms}, Math. Res. Letters, 1:809--822, 1994.
%          \end{thebibliography}

          \end{document}